
\documentclass[10pt]{scrartcl}
\setkomafont{disposition}{\bfseries\boldmath}
\advance\hoffset-1in
\advance\voffset-1in
\advance\hoffset2.95mm
\setlength\parindent{1.5em}
\setlength\headheight{3mm}
\setlength\headsep   {10mm}
\setlength\footskip{13mm}
\setlength\textwidth{140mm}
\setlength\textheight{222mm}
\setlength\oddsidemargin{32mm}
\setlength\evensidemargin{38mm}
\setlength\marginparwidth{25mm}
\setlength\topmargin{13mm}

\title{Computing Flat-Injective Presentations of Multiparameter Persistence Modules}
\author{Fabian Lenzen}

\usepackage{microtype,amsthm,amssymb,mathtools,mleftright,tikz-cd,nicematrix,bm,multirow,booktabs}
\usepackage[disable]{todonotes}
\usepackage[shortcuts]{extdash}
\usepackage[date=year]{biblatex}

\DeclareSourcemap{
	\maps[datatype=bibtex]{
		\map{
			\step[fieldset=eventtitle,null]
			\step[fieldsource=pubstate,match=prepublished,fieldset=pubstate,null]
			\step[fieldset=eprintclass,null]
			\step[fieldset=urldate,null]
		}
		\map{
			\step[fieldsource=doi,final]
			\step[fieldset=isbn,null]
			\step[fieldset=url,null]
			\step[fieldset=issn,null]
			\step[fieldset=eprint,null]
		}
		\map{
			\step[fieldsource=eprint,final]
			\step[fieldset=url,null]
		}
	}
}
\addbibresource{bibliography.bib}
\addbibresource{companions.bib}
\usepackage[format=plain,margin=2em,font=small,labelfont=bf]{caption}
\usepackage[subrefformat=parens]{subcaption}
\usepackage[noend,nosemicolon]{algorithm2e}
\usepackage[hidelinks]{hyperref}
\usepackage{zref-clever}
\usepackage{mysansmath}
\usepackage[patch-newtheorem]{mytheorem-enum}
\zcsetup{cap}
\zcRefTypeSetup{algorithm}{Name-sg=Algorithm, name-sg=algorithm, Name-pl=Algorithms, name-pl=algorithms}
\zcRefTypeSetup{equation}{Name-sg=, name-sg=, Name-pl=, name-pl=}
\zcsetup{countertype={algocf = algorithm}}

\usepackage{xstring,tikz}
\usetikzlibrary{shapes, shapes.misc, backgrounds}

\tikzset{%
	on layer/.code={%
		\pgfonlayer{#1}\begingroup
		\aftergroup\endpgfonlayer
		\aftergroup\endgroup
}}

\newlength{\ModuleDiagramSymbolSize}
\setlength{\ModuleDiagramSymbolSize}{.2ex}

\tikzset{%
	really densely dotted/.style={line cap=round, dash pattern=on 0pt off 2\pgflinewidth},
	module diagram symbol size/.style={%
		generator/.style={shape=circle, fill, outer sep=0, inner sep=#1},
		coordinate/.style={shape=circle, draw, outer sep=0, inner sep=#1},
		relation/.style={shape=diamond, fill, outer sep=0, inner sep=#1},
		syzygy/.style={shape=cross out, draw, outer sep=0, inner sep=#1}
	},
	every module diagram/.style={},
	every module/.style={line width=0.5pt},
	module diagram/.style={%
		x=3mm,
		y=3mm,
		module diagram symbol size=.2ex,
		line cap=round,
		every label/.append style={%
			font=\scriptsize,
			label distance=1pt,
			inner sep=0pt,
		},
		every module diagram
	},
	xdotted/.style={dash pattern=on 0pt off 1.5pt},
	diagram fill/.style={fill opacity=0.2},
	diagram transform/.style={shift={(-.5,-.5)}},
	invisible/.style={opacity=0, diagram fill/.style={opacity=0}}
}

\ProvideDocumentCommand{\Grid}{O{0} D(){0.5} m O{0} D(){0.5} m}{%
	\begin{scope}[on background layer]
		\foreach \i in {#1, #2, ..., #3}{
			\foreach \j in {#4, #5, ..., #6}{
				\fill[gray] (\i, \j) circle (0.5pt);
			}
		}
	\end{scope}
}

\NewDocumentCommand{\Axes}{O{->, shorten >=-0pt} r() O{0,0} r()}{%
	\coordinate (ninf) at (#2);
	\coordinate (O) at (#3);
	\coordinate (inf) at (#4);
	\draw[#1] (ninf -| O) -- (O |- inf);
	\draw[#1] (ninf |- O) -- (O -| inf);
	\coordinate (inf) at (#4);
}

\NewDocumentCommand{\ExtendedAxes}{r() O{0,0} m m r()}{%
	\coordinate (ninf) at (#1);
	\coordinate (O) at (#2);
	\coordinate (inf) at (#5);
	\coordinate (startdots) at (#3);
	\coordinate (enddots) at (#4);
	\draw (ninf -| O) -- (O |- startdots);
	\draw[xdotted] (O |- startdots) -- (O |- enddots);
	\draw[->, shorten >=-4pt] (O |- enddots) -- (O |- inf);
	\draw (ninf |- O) -- (O -| startdots);
	\draw[xdotted] (O -| startdots) -- (O -| enddots);
	\draw[->, shorten >=-4pt] (O -| enddots) -- (O -| inf);
}

\NewDocumentCommand{\FreeModule}{D<>{} O{} r() O{} m}{%
	\begin{scope}[#2]
		\node[#4] at (#3) {#5};
		\coordinate (n) at (#3);
		\coordinate (n) at ([diagram transform]n);
		\IfSubStr{#1}{x}{\coordinate (n) at (ninf |- n);}{}
		\IfSubStr{#1}{y}{\coordinate (n) at (ninf -| n);}{}
		\begin{scope}[on background layer=#2]
			\fill[diagram fill] (n) rectangle (inf);
		\end{scope}
		\IfSubStr{#1}{x}{}{\draw (n) -- (n |- inf);}
		\IfSubStr{#1}{y}{}{\draw (n) -- (n -| inf);}
	\end{scope}
}

\NewDocumentCommand{\xyExtendedFreeModule}{O{} r() O{} m}{%
	\begin{scope}[every module, #1]
		\path (#2) node[#3] {#4} coordinate (n);
		\begin{scope}[on background layer=#1]
			\fill[diagram fill] (#2) rectangle (inf);
		\end{scope}
		\draw
		(n) -- (n |- startdots)
		(n |- enddots) -- (n |- inf)
		(n) -- (n -| startdots)
		(n -| enddots) -- (n -| inf);
		\draw[xdotted]
		(n |- startdots) -- (n |- enddots)
		(n -| startdots) -- (n -| enddots);
	\end{scope}
}

\NewDocumentCommand{\xExtendedFreeModule}{O{} r() O{} m}{%
	\begin{scope}[#1]
		\path (#2) node[#3] {#4} coordinate (n);
		\begin{scope}[on background layer=#1]
			\fill[diagram fill] (#2) rectangle (inf);
		\end{scope}
		\draw
		(n |- inf) -- (n) -- (n -| startdots)
		(n -| enddots) -- (n -| inf);
		\draw[xdotted]
		(n -| startdots) -- (n -| enddots);
	\end{scope}
}

\NewDocumentCommand{\yExtendedFreeModule}{O{} r() O{} m}{%
	\begin{scope}[#1]
		\path (#2) node[#3] {#4} coordinate (n);
		\begin{scope}[on background layer=#1]
			\fill[diagram fill] (#2) rectangle (inf);
		\end{scope}
		\draw
		(n -| inf) -- (n) -- (n |- startdots)
		(n |- enddots) -- (n |- inf);
		\draw[xdotted]
		(n |- startdots) -- (n |- enddots);
	\end{scope}
}

\NewDocumentCommand{\FreeComplement}{O{} r() O{} m}{%
	\begin{scope}[#1]
		\path (#2) node[#3] {#4} coordinate (n);
		\coordinate (n) at ([shift={(-.5,-.5)}]n);
		\begin{scope}[on background layer=#1]
			\fill[diagram fill] (n |- inf) |- (n -| inf) |- (ninf) |- (inf -| n);
		\end{scope}
		\draw (n |- inf) |- (n -| inf);
	\end{scope}
}

\NewDocumentCommand{\InjModule}{D<>{} O{} r() O{} m}{%
	\begin{scope}[every module, #2]
		\node[#4] at (#3) {#5};
		\coordinate (n) at (#3);
		\coordinate (n) at ([shift={(.5,.5)}]n);
		\IfSubStr{#1}{x}{\coordinate (n) at (inf |- n);}{}
		\IfSubStr{#1}{y}{\coordinate (n) at (inf -| n);}{}
		\begin{scope}[on background layer=#2]
			\fill[diagram fill] (n) rectangle (ninf);
		\end{scope}
		\IfSubStr{#1}{x}{}{\draw (n) -- (n |- ninf);}
		\IfSubStr{#1}{y}{}{\draw (n) -- (n -| ninf);}
	\end{scope}
}

\NewDocumentCommand{\InjComplement}{O{} r() O{} m}{%
	\begin{scope}[every module, #1]
		\path (#2) node[#3] {#4} coordinate (n);
		\coordinate (n) at ([shift={(-.5,-.5)}]n);
		\begin{scope}[on background layer=#1]
			\fill[diagram fill] (ninf |- n) -| (n |-  ninf) -| (inf) -| cycle;
		\end{scope}
		\draw (ninf |- n) -| (n |-  ninf);
	\end{scope}
}

\NewDocumentCommand{\TruncatedInjModule}{s O{} r() O{} m}{%
	\begin{scope}[every module, #2]
		\coordinate (n) at (#3);
		\IfBooleanTF{#1}{
			\path
			(ninf) node[generator] {}
			(ninf |- n) node[relation] {}
			(ninf -| n) node[relation] {}
			(n) node[syzygy, #4] {#5};
		}{
			\node[#4, at=(n)] {#5};
		}
		\begin{scope}[on background layer=#2]
			\fill[diagram fill] (#3) rectangle (ninf);
		\end{scope}
		\draw
		(n |- enddots) |- (n -| enddots)
		(n |- startdots) |- (enddots |- ninf)
		(startdots |- ninf) -| (startdots -| ninf)
		(enddots -| ninf) |- (startdots |- n);
		\draw[xdotted]
		(n -| enddots) -- (n -| startdots)
		(startdots -| n) -- (enddots -| n)
		(enddots |- ninf) -- (startdots |- ninf)
		(startdots -| ninf) -- (enddots -| ninf);
	\end{scope}
}

\NewDocumentCommand{\TruncatedInjComplement}{s O{} r() O{} m}{%
	\begin{scope}[#2]
		\coordinate (n) at (#3);
		\IfBooleanTF{#1}{
			\path
			(ninf |- n) node[generator] {}
			(ninf -| n) node[generator] {}
			(n) node[relation, #4] {#5};
		}{
			\node[#4, at=(n)] {#5};
		}
		\begin{scope}[on background layer=#2]
			\fill[diagram fill] (n) -- (ninf |- n) -- (ninf |- inf) -- (inf) -- (ninf -| inf) -- (n |- ninf) -- cycle;
		\end{scope}
		\draw
		(inf -| ninf) -- (ninf |- n) -- (startdots |- n)
		(enddots |- n) -| (n |- enddots)
		(n |- startdots) -- (n |-  ninf) -- (ninf -| inf);
		\draw[xdotted]
		(startdots -| n) -- (enddots -| n)
		(startdots |- n) -- (enddots |- n);
	\end{scope}
}

\NewDocumentCommand\StepModule{O{}r()mr()}{%
	\draw[#1] (inf -| #2) -- (#2) #3 -| (#4) -- (#4 -| inf);
	\begin{scope}[on background layer=#1]
		\fill[diagram fill] (inf -| #2) -- (#2) #3 -| (#4) -- (#4 -| inf) |- cycle;
	\end{scope}
}

\makeatletter
\patchcmd\caption@subtypehook{\let\label\subcaption@label}
{\let\label\subcaption@label\let\ltx@label\subcaption@label}{}{\fail}

\newcommand{\theoremfont}{\bfseries}
\newcommand{\theoremlabelfont}{\normalfont}
\newtheoremstyle{plain}{3pt}{3pt}{\itshape}{}{\theoremfont}{.}{.5em}{\thmname{#1}\thmnumber{ #2}\thmnote{ {\theoremlabelfont(#3)}}}
\newtheoremstyle{remark}{3pt}{3pt}{}{}{\theoremfont}{.}{.5em}{\thmname{#1}\thmnumber{ #2}\thmnote{ {\theoremlabelfont(#3)}}}
\newtheoremstyle{definition}{3pt}{3pt}{}{}{\theoremfont}{.}{.5em}{\thmname{#1}\thmnumber{ #2}\thmnote{ {\theoremlabelfont(#3)}}}
\theoremstyle{plain}
\newtheorem{theorem}{Theorem}[section]
\newtheorem{proposition}[theorem]{Proposition}
\newtheorem{lemma}[theorem]{Lemma}
\newtheorem{corollary}[theorem]{Corollary}
\theoremstyle{remark}
\newtheorem*{claim*}{Claim}
\newtheorem{remark}[theorem]{Remark}
\newtheorem{example}[theorem]{Example}
\theoremstyle{definition}
\newtheorem{definition}[theorem]{Definition}
\newenvironment{claimproof}{\begin{proof}[Proof of claim]}{\end{proof}}

\numberwithin{equation}{section}

\let\into\hookrightarrow
\let\onto\twoheadrightarrow
\let\xto\xrightarrow
\let\xinto\xhookrightarrow

\newcommand{\xtofrom}[2][]{
	\mathrel{\mathop{%
		\vcenter{\offinterlineskip\m@th
			\ialign{\hfil##\hfil\cr
				\hphantom{$\scriptstyle\mspace{8mu}{#1}\mspace{8mu}$}\cr
				\rightarrowfill\cr
				\vrule height0pt width 2em\cr
				\leftarrowfill\cr
				\hphantom{$\scriptstyle\mspace{8mu}{#2}\mspace{8mu}$}\cr
				\noalign{\kern-0.3ex}
			}%
		}%
	}\limits^{#1}_{#2}}%
}
\DeclareMathOperator{\coker}{coker}
\DeclareMathOperator{\rk}{rk}
\DeclareMathOperator{\im}{im}
\let\lim\relax
\DeclareMathOperator{\lim}{lim}
\DeclareMathOperator{\Lim}{Lim}
\DeclareMathOperator{\colim}{colim}
\DeclareMathOperator{\Colim}{Colim}
\DeclareMathOperator{\barc}{barc}
\DeclareMathOperator{\piv}{piv}
\DeclareMathOperator{\rg}{rg}
\DeclareMathOperator{\cg}{cg}

\DeclareMathOperator{\Hom}{Hom}
\DeclareMathOperator{\IHom}{\mathsf{Hom}}
\DeclareMathOperator{\Int}{Int}
\mathchardef\mhyphen="2D
\newcommand{\Proj}[1]{\mathcal{F}_{#1}}
\newcommand{\Inj}[1]{\mathcal{I}_{#1}}
\newcommand{\Pers}[1]{#1\mhyphen\mathbf{Pers}}

\newcommand{\Vect}{\mathbf{Vec}}

\newcommand{\Z}{\mathbf{Z}}
\newcommand\lZ{\underline{\Z}}
\newcommand\uZ{\overline{\Z}}

\newcommand{\N}{\mathbf{N}}
\newcommand{\F}{\Bbbk}
\newcommand{\SimpleModule}{\mathbf{k}}
\DeclarePairedDelimiter{\IntSet}{[}{]}
\DeclareMathOperator{\id}{id}
\newcommand{\one}{\bm{1}}
\newcommand{\calO}{\mathcal{O}}
\DeclarePairedDelimiter{\Shift}{\langle}{\rangle}
\DeclarePairedDelimiter{\HShift}{[}{]}
\DeclarePairedDelimiter{\abs}{\lvert}{\rvert}

\DeclarePairedDelimiterX{\Set}[1]{\{}{\}}{\setargs{#1}}
\DeclarePairedDelimiterX{\Span}[1]{\langle}{\rangle}{\setargs{#1}}
\NewDocumentCommand{\setargs}{>{\SplitArgument{1}{;}}m}{\setargsaux#1}
\NewDocumentCommand{\setargsaux}{mm}{\IfNoValueTF{#2}{#1} {#1\nonscript\:\delimsize\vert\allowbreak\nonscript\:\mathopen{}#2}}%
\NewDocumentCommand{\Mtx}{s}{\IfBooleanTF{#1}{\@MtxStar}{\@Mtx}} 
\newenvironment{smallcases}{\left\{\begin{smallmatrix*}[l]}{\end{smallmatrix*}\right.}
\newenvironment{smallarray}[1]{
	\null\,\vcenter\bgroup\scriptsize
	\arraycolsep=.13885em
	\hbox\bgroup$\array{@{}#1@{}}%
}{%
	\endarray$\egroup\egroup\,\null%
}
\NewDocumentCommand{\@Mtx}{om}{%
	\IfNoValueTF{#1}{%
		\mleft(\begin{smallmatrix}#2\end{smallmatrix}\mright)
	}{%
		\mathopen#1(\begin{smallmatrix}#2\end{smallmatrix} \mathclose#1)
	}
}
\NewDocumentCommand{\@MtxStar}{O{c}om}{%
	\IfNoValueTF{#2}{%
		\mleft(\begin{smallmatrix*}[#1]#3\end{smallmatrix*}\mright)
	}{%
		\mathopen#2(\begin{smallmatrix*}[#1]#3\end{smallmatrix*} \mathclose#2)
	}
}
\DeclareRobustCommand{\svdots}{
	\vbox{%
		\baselineskip=0.33333\normalbaselineskip
		\lineskiplimit=0pt
		\hbox{.}\hbox{.}\hbox{.}%
		\kern-0.2\baselineskip
	}%
}
\let\vdots\svdots

\renewcommand{\u}[1]{\mathsf{u}(#1)}

\newlength{\ScratchLen}

\tikzset{every module diagram/.append style={baseline={([yshift=-0.7ex]current bounding box.center)}}}

\SetKw{Break}{break} 
\SetKw{Yield}{yield}
\SetKw{Return}{return}
\SetKwFor{Forever}{forever}{do}{end}
\SetKwProg{Fn}{function}{:}{end}
\SetKwProg{Def}{def}{:}{end}


\SetNlSty{textsf}{}{}
\SetArgSty{sfit}
\SetCommentSty{sfit}
\SetKwSty{sfbf}

\NewDocumentCommand{\FunctionName}{om}{{%
	\textup{\texttt{\IfNoValueTF{#1}{#2}{\hyperref[#1]{#2}}}}%
	\let\tmp\relax%
}}
\NewDocumentCommand{\FunctionArg}{om}{\ensuremath{\IfNoValueTF{#1}{(#2)}{\mathopen#1(#2\mathclose#1)}}}
\NewDocumentCommand{\comment}{s O{f} m}{\tcp*[#2]{\IfBooleanTF{#1}{\smash}{}{\parbox[t]{\commentwidth}{\raggedright #3}}}}
\newlength{\commentwidth}
\setlength{\commentwidth}{4cm}

\usetikzlibrary{calc}
\tikzset{
	math node/.style={execute at begin node=$#1, execute at end node=$},
	math node/.default=\textstyle,
	caption/.style={below=0pt of current bounding box, math node},
}
\makeatother
\nocite{FlangePresentations.jl}
\begin{document}
\maketitle
\begin{abstract}
	A \emph{flat-injective presentation} of a multiparameter persistence module $M$ characterizes $M$ as the image of a morphism from a flat to an injective persistence module.
	Like flat or injective presentations, flat-injective presentations can be easily represented by a single \emph{graded matrix},
	completely describe the persistence module up to isomorphism, and can be used as starting point to compute other invariants of it,
	such as the rank invariant, persistence images, and others.
	
	If all homology modules of a bounded chain complex $F_\bullet$ of flat $n$-parameter modules are finite dimensional,
	it is known that $F_\bullet$ and its shifted image $\nu F_\bullet[n]$ under the Nakayama functor are quasi-isomorphic,
	where $\nu F_\bullet[n]$ is a complex of injective modules.
	We give an explicit construction of a quasi-isomorphism $\phi_\bullet\colon F_\bullet \to \nu F_\bullet[n]$,
	based on the boundary morphisms of $F_\bullet$.

	If $F_\bullet$ is a flat resolution of a finite dimensional persistence module $M$,
	then the degree-zero part $\phi_0\colon F_0 \to \nu F_n$ is a flat-injective resolution of $M$.
	From our construction of $\phi$, we obtain a method to compute a matrix representing $\phi_0$
	from the matrices representing the resolution $F_\bullet$.
	A Julia package implementing this method is available.
\end{abstract}

\section{Introduction}
\paragraph*{Motivation}
Let $\F$ be an arbitrary field,
and $M$ be a one-parameter persistence module,
i.e., a $\Z$-indexed system $\dotsb \to M_{-1} \xto{M_{0,-1}} M_0 \xto{M_{1,0}} M_1 \to \dotsb$ of $\F$-vector spaces and $\F$-linear morphisms.
If all $M_i$ are finite dimensional, then there is a unique (up to isomorphism) decomposition
\begin{equation}
	\label{eq:barcode-decomp}
	M \cong \bigoplus_{i \in I} \Int(b_i, d_i)
\end{equation}
of $M$ for some indexing set $I$ and $-\infty \leq b_i < d_i \leq \infty$,
where $\Int(b, d)$ denotes the \emph{interval module} with
\begin{align*}
	\Int(b, d)_z     &= \begin{smallcases} \F     & \text{if $b \leq z      < d$,} \\ 0 & \text{otherwise},\end{smallcases} &
	\Int(b, d)_{z'z} &= \begin{smallcases} \id_\F & \text{if $b \leq z < z' < d$,} \\ 0 & \text{otherwise}.\end{smallcases}
\end{align*}
The multiset $\barc M \coloneqq \Set{(b_i, d_i); i \in I}$ is uniquely determined by $M$ and called the \emph{barcode} of $M$.

In the multiparameter case, however, i.e., for a commutative system $M$ of vector spaces indexed by $\Z^n$ for $n > 1$,
it is not possible in general to decompose $M$ into interval modules as in \eqref{eq:barcode-decomp};
in fact, the indecomposable modules can be arbitrarily complicated \cite{BuchetEscolar:2018,BauerScoccola:2025}.
Instead, one may represent $M$ in terms of a free or flat presentation; i.e., a morphism $p\colon F_1 \to F_0$ of free (or flat) persistence modules, such that $\coker p \cong M$.
Dually, an injective presentation of $M$ is a morphism $i\colon I^0 \to I^1$ of injective persistence modules such that $\ker i \cong M$ \cite[§5.4]{Miller:2020a}.
Lastly, a \emph{flat-injective presentation} of $M$, a concept introduced by Miller~\cite{Miller:2020a},
is a morphism $\phi\colon F \to I$ from a flat persistence module $F$ to an injective persistence module $I$, such that $M \cong \im \phi$.
Finite flat, injective and flat-injective presentations can be represented by \emph{graded matrices} (also called \emph{monomial matrices}, 
see \hyperref[sec:graded-matrices]{below} and \cites[§2.5]{LesnickWright:2022}[Def.~5.4]{Miller:2020a}).

In the one-parameter case, the barcode decomposition \eqref{eq:barcode-decomp} can be seen as a special flat presentation
\begin{gather*}
	\SwapAboveDisplaySkip
	M \cong \coker\bigl(\smashoperator{\bigoplus_{d_i   <  \infty} } F(d_i)   \xinto{\Phi} \smashoperator{\bigoplus_{-\infty \leq b_i}} F(b_i)\bigr),    \\\shortintertext{injective presentation}
	M \cong   \ker\bigl(\smashoperator{\bigoplus_{d_i \leq \infty} } I(d_i-1) \xinto{\Phi} \smashoperator{\bigoplus_{-\infty   <  b_i-1}} I(b_i-1)\bigr),\\\shortintertext{or flat-injective presentation}
	M \cong    \im\bigl(\smashoperator{\bigoplus_{-\infty \leq b_i}} F(b_i)     \xto{\Phi} \smashoperator{\bigoplus_{d_i \leq \infty} } I(d_i-1)\bigr),
\end{gather*}
where $F(t) \coloneqq \Int(t,\infty)$ is a flat module and $I(t) \coloneqq \Int(-\infty,t)$ is an injective module.
The three presentations are special in the sense that in all three cases, the presentation matrix $\Phi$ has entries $\Phi_{ij} = \delta_{ij}$.
For more than one parameter, such a special (flat, injective or flat-injective) presentation need not exist;
still, general flat, injective and flat-injective presentations can be defined and computed for any number of parameters.

In a sense, a flat-injective presentation sits ``in the middle'' between flat and injective presentations, in the following way.
If $p\colon F_1 \to F_0$ is a flat presentation of of a persistence module $M$ with augmentation morphism $\varepsilon\colon F_0 \onto M$
and $i\colon I^0 \to I^1$ is an injective presentation of $M$ with augmentation morphism $\eta\colon M \into I^0$,
then the composition $\eta\varepsilon\colon F_0 \to I^0$ is a flat-injective presentation of $M$.

Further, recall that the \emph{pointwise} or \emph{Matlis dual} of a persistence module $M$
is the persistence module $M^*$, where $(M^*)_z = (M_{-z})^*$ is the dual vector space of $M_{-z}$.
Since the Matlis dual of a flat module is injective \cite{MillerSturmfels:2005},
if $p$ is a flat presentation of $M$, then $p^*$ is an injective presentation of $M^*$.
In contrast, flat-injective presentations are a self-dual notion, in the sense that if $\phi$ is a flat-injective presentation of $M$, then $\phi^*$ is a flat-injective presentation of $M^*$.

Lastly, each of the three presentation types allows to easily determine the ranks of the structure morphisms $M_{z' z}\colon M_z \to M_{z'}$ of $M$ for $z \leq z'$.
If $U$, $V$ and $W$ are graded matrices that represent a flat, flat-injective and injective presentation of a persistence module $M$, respectively,
then one obtains $\rk M_{z'z}$ by computing the corank, rank and nullity of a certain submatrix $U'$, $V'$ and $W'$, of $U$, $V$ and $W$; see \zcref{rmk:rank-invariant} for details.
These matrices have
\begin{center}
	\begin{tabular}{rccc}
		\toprule
		                &     $U'$      &     $V'$      &     $W'$      \\
		\midrule
		   many rows if & $z$ is large  & $z'$ is small & $z'$ is small \\
		many columns if & $z'$ is large & $z$ is large  & $z$ is small. \\
		\bottomrule
	\end{tabular}
\end{center}
Therefore, the effort for computing $\rk M_{z'z}$ from the different presentation matrices might vary,
depending on the presentation type.

Thus, although flat, flat-injective and injective presentations of a persistence module $M$ are conceptually similar,
contain the same information and can all be seen as multiparameter generalization of the barcode of $M$,
the usefulness for certain tasks may vary in practice.
Nevertheless, research on multiparameter persistence has mostly focused on flat (and free) presentations of persistence modules.
We can only guess reasons for this:
First, expository textbooks on commutative algebra put a stronger emphasis on free, projective and flat presentations and resolutions than on injective ones.
For the second reason, assume that $M$ is given in terms of a \emph{free implicit representation},
which means that $M = \ker f/\im f'$ for a sequence $F' \xto{f'} F \xto{f} F''$ of free persistence modules with $f f' = 0$,
which is the relevant setting in the context of computing multiparameter persistent homology.
For the case of two-parameter persistence, there exist cubic time algorithms \cite{LesnickWright:2019,KerberRolle:2021,companion-computation}
to compute a minimal free presentation or resolution of $M$ from this data.
Also, per the duality result from \cite[Corollary~1.4]{companion-dualities}, every algorithm that computes a free resolution of $M$ from this data
readily yields an injective resolution of $M$.
On the other hand, there exists only since very recently a software package that is able to compute flat-injective presentations,
which is an implementation of the algorithm of \citeauthor{HelmMiller:2005}~\cite[Algorithm~3.6]{HelmMiller:2005}
in the computer algebra system OSCAR \cite{DeckerEderEtAl:2025},
building on top of a highly nontrivial amount of computational commutative algebra.
To our knowledge, this is the only implementation of \cite[Algorithm~3.6]{HelmMiller:2005}.

\paragraph*{Contributions}
In this paper, we close this gap with the following contributions.
Let $F_\bullet$ be a chain complex of free $n$-parameter persistence modules, such that $H_q(F_\bullet)$ has finite total dimension as vector space.
In an earlier paper \cite{companion-dualities}, we showed that the complexes $F_\bullet$ and $\nu F_\bullet[n]$ are quasi-isomorphic, where $\nu$ denotes the Nakayama functor 
(see \zcref{thm:local-duality}).
Our central result, \zcref{thm:main}, gives an explicit construction for a quasi-isomorphism $\phi_\bullet\colon F_\bullet \to \nu F_\bullet[n]$.
In particular, if $F_\bullet$ is the free resolution of a finite dimensional persistence module $M$, then the degree-zero part $\phi_0$ of $\phi_\bullet$
is a flat-injective presentation for $M$; see \zcref{thm:flat-injective presentation module}.
If matrices representing the complex $F_\bullet$ are given, then one obtains a straightforward algorithm to compute matrices representing $\phi_\bullet$.
For the case of $F_\bullet$ being a free resolution, this is carried out in detail in \zcref{sec:flange-pres-matrix}.
Putting things together, we obtain a cubic time algorithm to compute a matrix representing $\phi_0$ from the matrices representing $F_\bullet$,
see \zcref{thm:complexity}.
A self-contained implementation in the Julia programming language is provided in the package \texttt{FlangePresentations.jl}~\cite{FlangePresentations.jl};
see \zcref{sec:implementation}.
Using our implementation, it is possible to compute flat-injective presentations of multiparameter persistent homology modules of typical size.

\paragraph*{Related work}
Computing free presentations (or, more generally, resolutions) has been extensively dealt with in computational commutative algebra.
Typical implementations build upon Schreyer's Algorithm \cite{ErocalMotsakEtAl:2016a}.
In the context of persistent homology, considerable improvements have been achieved for special cases \cite{LesnickWright:2022,FugacciKerberEtAl:2023,Lenzen:2023}.
Furthermore, for multiparameter persistence modules of finite total dimension, there exists a simple correspondence between free and injective resolutions \cite{companion-dualities}
that arises from Grothendieck local duality \cites[278]{Hartshorne:1966}[Theorem~3.5.8]{BrunsHerzog:1998}[Theorem~11.2.6]{BrodmannSharp:2012}
or Greenlees--May duality \cite{GreenleesMay:1992,Faridian:2019,Miller:2002}.

An algorithm for computing flat-injective and injective presentations is provided in \cite[Algorithm~3.6]{HelmMiller:2005},
which has been implemented in the computer algebra package OSCAR \cite{DeckerEderEtAl:2025} recently.
The algorithm expects a description of an input module in terms of generators and relations.
Due to its relying on Gröbner base calculations, it is not expected to be of polynomial complexity.

Another appropach for constructing flat-injective presentations is followed by Stefanou and Grimpen \cite{GrimpenStefanou:2025}.
Given a module $M$, the authors construct a flat-injective presentation of $M$
from the graded components and the transition maps between them. 
If $M$ is finite dimensional, then the constructed presentation has $\sum_{z\in\Z^n} \dim_{M_z}$ generators and cogenerators.
The authors then provide an algorithm for converting a flat-injective presentation into a minimal one.

\paragraph*{Acknowledgements}
We thank Ezra Miller, Anastasios Stefanou and Fritz Grimpen for providing extensive feedback for earlier versions of this paper.
We thank Ulrich Bauer, a question of whom initiated our working on this subject.

\section{Background}
\label{sec:background}
Let $\F$ be an arbitrary field, and let $\Vect$ denote the category of $\F$-vector spaces.

\paragraph*{Persistence modules}
A \emph{$\Z^n$-persistence module} is a functor $M\colon z \mapsto M_z$, $(z \leq z') \mapsto M_{z'z}$ from the poset $\Z^n$ to the category of finite dimensional $\F^n$-vector spaces.
The vector spaces $M_z$ are the \emph{components} and the morphisms $M_{z'z}\colon M_z \to M_{z'}$ the \emph{structure morphisms} of $M$.
A morphism of persistence modules is a natural transformation of functors.
Persistence modules over $\Z^n$ and their morphisms form an abelian category, denoted by $\Pers{n}$, in which limits and colimits (e.g., direct sums, kernels, cokernels or images)
are computed componentwise.
Since a persistence module $M$ is still a $\F$-vector space, it makes sense to consider its (total) dimension $\dim M \coloneqq \sum_z \dim M_z$.
Note that $\Pers{n}$ is equivalent to the category of graded modules over the the $\Z^n$-graded ring $\F[x_1,\dotsc,x_n]$ with finite dimensional graded components \cite{CarlssonZomorodian:2009}.
For the rest of the paper, we will refer to persistence modules simply as \emph{modules}.

\begin{figure}
	\tikzcdset{every diagram/.append style={cramped,font=\scriptsize,row sep={2em,between origins},column sep={2.5em,between origins}, baseline=(B)}}
	\subcaptionbox{\label{fig:generic module diagram}}{
		\begin{tikzcd}[
			column sep={2.75em,between origins},
			ampersand replacement=\&,
			execute at end picture={
				\draw (O.south west) coordinate (B) edge [->, shorten <=-5pt] (O.south west |- ne.north east) edge [->, shorten <=-5pt] (O.south west -| ne.north east);
			}
			]
			\vdots                       \& \vdots           \& \vdots           \& [-3pt] |[alias=ne]| \phantom{0} \\
			M_{0,2} \rar\uar             \& M_{1,2} \rar\uar \& M_{2,2} \rar\uar \& \cdots                          \\
			M_{0,1} \rar\uar             \& M_{1,1} \rar\uar \& M_{2,1} \rar\uar \& \cdots                          \\
			|[alias=O]| M_{0,0} \rar\uar \& M_{1,0} \rar\uar \& M_{2,0} \rar\uar \& \cdots
		\end{tikzcd}
	}\hfill
	\subcaptionbox{\label{fig:the indecompose module}}{%
		\begin{tikzcd}[
			ampersand replacement=\&,
			execute at end picture={
				\draw (O.south west) coordinate (B) edge [->, shorten <=-5pt] (O.south west |- ne.north east) edge [->, shorten <=-5pt] (O.south west -| ne.north east);
			},
			baseline=(O.south west)
			]
			0                                             \& 0                                                                       \& 0                              \& [-5pt] |[alias=ne,alias=66]| \phantom{0} \\
			|[alias=04]| \F \rar[equal]\uar               \& |[alias=24]| \F \rar[equal]\uar                                         \& |[alias=44]|\F \rar\uar        \& 0                                        \\
			|[alias=02]| \F \rar["\Mtx{0\\1}"]\uar[equal] \& |[alias=22]| \F^2 \rar\uar\ar[ur,phantom,"\scriptstyle {(1,1)}" pos=.3] \& |[alias=42]|\F \rar\uar[equal] \& 0                                        \\
			|[alias=O]| 0 \rar\uar                        \& |[alias=20]| \F \rar[equal]\uar["\Mtx{1\\0}"']                          \& |[alias=40]|\F \rar\uar[equal] \& 0
		\end{tikzcd}
	}\hfill
	\subcaptionbox{\label{fig:module diagram of the indecomposable module}}{
		\begin{tikzpicture}[module diagram, y=1em, x=1em, baseline=0pt]
			\Axes[->, shorten <=-5pt](0,0)(6.5,6.5)
			\Grid[1](3){6}[1](3){6}
			\begin{scope}[blue]
				\filldraw[fill opacity=0.2] (2,0) -| (6,6) -| (0,2) -| cycle;
				\filldraw[fill opacity=0.2] (2,2) rectangle (4,4);
				\draw[->] (3,1.5) to["$\Mtx{1 \\ 0}$" {right, at start, anchor=north west, inner sep=1pt}] (3,2.5);
				\draw[->] (1.5,3) to["$\Mtx{0 \\ 1}$" {above, at start, anchor=-55, inner sep=1pt}] (2.5,3);
				\draw[->] (3.5,3.5) to["$\scriptstyle (1\:1)$" {left, at end, anchor=south east, inner sep=1pt}] (4.5,4.5);
			\end{scope}
		\end{tikzpicture}
	}\hfill
	\begin{tabular}[b]{@{}c@{}}
		\subcaptionbox{\label{fig:one-parameter module:1}}{
			\begin{tikzpicture}[module diagram, node font=\scriptsize, nodes={inner sep=0.25pt}]
				\path
				(-2,0) node (0) {$\cdots$}
				(0,0) node (1) {$\F$} edge[<-] (0)
				(2,0) node (2) {$\F^2$} edge[<-] (1)
				(4,0) node (3) {$\F^3$} edge[<-] (2)
				(6,0) node (4) {$\F^4$} edge[<-] (3)
				(8,0) node (5) {$\cdots$} edge[<-] (4);
				\path[use as bounding box] (-2,0) -- (8,10pt);
			\end{tikzpicture}
		} \\[2em]
		\subcaptionbox{\label{fig:one-parameter module:2}}{
		\begin{tikzpicture}[module diagram, node font=\scriptsize, nodes={inner sep=0.25pt}]
				\draw[->] (-2,0) -- (8,0);
				\draw[blue, yshift=10pt, thick] (0,0) node[generator]{} -- (8,0);
				\draw[blue, yshift= 8pt, thick] (2,0) node[generator]{} -- (8,0);
				\draw[blue, yshift= 6pt, thick] (4,0) node[generator]{} -- (8,0);
				\draw[blue, yshift= 4pt, thick] (6,0) node[generator]{} -- (8,0);
				\path[use as bounding box] (-2,0) -- (8,10pt);
		\end{tikzpicture}
	}
\end{tabular}
	\caption{Graphical illustration of one- and two-parameter modules.}
	\label{fig:module diagram}
\end{figure}
For $n \leq 2$, we visualize modules as in \zcref{fig:generic module diagram},
i.e., as a commutative grid of the module's components and structure morphisms.
Simplifying the notation further, we represent the dimension of the components by intensity of color, where areas left white correspond to dimension zero.
If necessary, we indicate by matrices the structure morphisms between these regions;
where no matrix is present, the structure morphisms are given by canonical inclusions or projections.
For example, the module in \zcref{fig:the indecompose module} is depicted as in \zcref{fig:module diagram of the indecomposable module}.
For one-parameter modules, we use analogous illustrations; see \zcref{fig:one-parameter module:1,fig:one-parameter module:2}.

\paragraph*{Free, flat and injective modules}
Let $\lZ \coloneqq \Z \cup \{-\infty\}$ and $\uZ \coloneqq \Z \cup \{\infty\}$.
For $z \in \lZ^n$ and $z' \in \uZ^n$, let $F(z)$ and $I(z')$ be the modules with
\begin{equation}
	F(z)_{v} = \begin{cases}
		\F & \text{if $z \leq v$}, \\
		0   & \text{otherwise,}
	\end{cases}
	\qquad \text{resp.} \qquad
	I(z')_{v} = \begin{cases}
		\F & \text{if $v \leq z'$}, \\
		0   & \text{otherwise}
	\end{cases}
\end{equation}
and the obvious structure morphisms.
A module $M \in \Pers{n}$ is \emph{flat} (resp.\ \emph{injective}) if there is an isomorphism
\begin{equation}
	\label{eq:def:basis}
	\smashoperator{\bigoplus_{z \in \rk M}} F(z) \xto{\beta} M \qquad\text{resp.}\qquad \smashoperator{\bigoplus_{z \in \rk M}} I(z) \xto{\beta} M
\end{equation}
for some multiset $\rk M \subseteq \lZ^n$ (resp.\ $\rk M \subseteq \uZ^n$);
see \zcref{fig:free-flat-inj-modules} for an illustration.
The multiset $\rk M$ is uniquely determined by $M$ and called the \emph{generalized graded rank} of $M$.
An isomorphism $\beta$ as in \eqref{eq:def:basis} is a \emph{generalized basis} of $M$.
An \emph{ordered generalized basis} of $M$ is a generalized basis of $M$, together with a total order on $\rk M$.
A flat module $M$ is \emph{free} and an injective module $M$ is \emph{cofree} if $\rk M \subseteq \Z^n$.
The generalized rank and a generalized basis of a free module are its graded rank and a basis in the usual sense.
A module $M$ is \emph{finitely generated} if there exists a surjective morphism $F \onto M$ for $F$ free of finite rank.

\begin{figure}
	\newcommand\Setup{
		\Axes(-1.5,-1.5)(5.5,5.5)
		\Grid[-1](0){5}[-1](0){5}
		\coordinate (g) at (3,3);
	}
	\begin{subfigure}{.2\linewidth}
		\centering
		\begin{tikzpicture}[module diagram]
			\Setup
			\FreeModule[blue](2,2){}
			\FreeModule[blue](1,4){}
			\FreeModule[blue](4,1){}
		\end{tikzpicture}
		\caption{free}
	\end{subfigure}%
	\begin{subfigure}{.2\linewidth}
		\centering
		\begin{tikzpicture}[module diagram]
			\Setup
			\FreeModule<x>[blue](g){}
			\FreeModule<y>[blue](g){}
			\FreeModule[blue](1,1){}
		\end{tikzpicture}
		\caption{flat} 
	\end{subfigure}%
	\begin{subfigure}{.2\linewidth}
		\centering
		\begin{tikzpicture}[module diagram]
			\Setup
			\FreeModule<xy>[blue](g){}
		\end{tikzpicture}
		\caption{flat-injective}
	\end{subfigure}%
	\begin{subfigure}{.2\linewidth}
		\centering
		\begin{tikzpicture}[module diagram]
			\Setup
			\coordinate (g) at (1,1);
			\InjModule<x>[blue](g){}
			\InjModule<y>[blue](g){}
			\InjModule[blue](3,3){}
		\end{tikzpicture}
		\caption{injective}
	\end{subfigure}%
	\begin{subfigure}{.2\linewidth}
		\centering
		\begin{tikzpicture}[module diagram]
			\Setup
			\InjModule[blue](4,1){}
			\InjModule[blue](3,3){}
			\InjModule[blue](1,4){}
		\end{tikzpicture}
		\caption{cofree}
	\end{subfigure}
	\caption{Free, flat (=projective), injective and cofree modules.
		Each quadrant, half plane or entire plane corresponds to one free, flat, injective or cofree indecomposable summand.}
	\label{fig:free-flat-inj-modules}
\end{figure}

For $M$ a module and $z \in \Z^n$, let $M\Shift{z}$ be the module with $M\Shift{z}_w \coloneqq M_{w+z}$.
We let $\IHom(M, N)$ be the module with
\[
	\IHom(M,N)_z \coloneqq \Hom(M, N\Shift{z}),
\]
and $M \otimes N$ be the module with
\begin{equation}
	\label{eq:def:tensor}
	(M \otimes N)_z = \smashoperator{\bigoplus_{v+w=z}} M_v \otimes_\F N_w/{\sim_\otimes},
\end{equation}
for the equivalence relation $\sim_\otimes$ generated by
\begin{equation}
	\label{eq:eq-rel-tensor}
	M_{v,v-u}(m) \otimes n \sim_\otimes m \otimes N_{w,w-u}(n)
\end{equation}
for all $m \in M_{v-u}$ and $n \in N_{w-u}$.
These definitions give rise to a right exact bifunctor $- \otimes -$ and a left exact bifunctor $\IHom(-,-)$ that fit into the usual adjunction.
A module is flat (resp.\ injective) if and only if it is flat (injective) in the categorial sense,
meaning that $- \otimes M$ (resp.\ $\IHom(-, M)$) is an exact functor \cite[Lemma~11.23, Theorem~11.30]{MillerSturmfels:2005}.

\paragraph*{Duals}
\phantomsection\label{sec:duals}
The \emph{Matlis dual} $M^*$ of a module $M$ is the module with $(M^*)_z = \Hom_\F(M_{-z}, \F)$.
The \emph{global dual} $M^\dagger$ of $M$ is the module with $(M^\dagger)_z = \Hom(M, F(-z))$.
The functor $(-)^*$ is an exact autoequivalence on $\Pers{n}$.
A module $M$ is flat (resp.\ free) if and only if $M^*$ is injective (resp.\ cofree) \cite[Lemma~11.23]{MillerSturmfels:2005}.

If $\beta$ is a generalized basis of a finite rank flat or injective module $M$ as in \eqref{eq:def:basis},
then $(\beta^*)^{-1}$ is a generalized basis of $M^*$.
If $M$ is flat, then $(\beta^\dagger)^{-1}$ is a generalized basis of $M^\dagger$.
We call both the \emph{dual basis} of $\beta$.

\paragraph*{Graded matrices}
\label{sec:graded-matrices}
For computational considerations, it is customary to describe morphisms of free and injective modules using graded matrices, which we outline in the following.
A \emph{graded matrix} $U$ consists of an underlying matrix $\u{U} \in \F^{l \times m}$,
together with \emph{row} and \emph{column grades} $\rg^U_i$, $\cg^U_j \in (\Z \cup \{\pm\infty\})^n$ for each $i \leq l$ and $j \leq m$.
We call $U$ \emph{valid} if $U_{ij} \neq 0$ only if $\rg^U_i \leq \cg^U_j$.
We call $U$ \emph{anti-valid} if $U_{ij} \neq 0$ only if $\rg^U_i \geq \cg^U_j$.
Graded matrices are called \emph{monomial matrices} in \cite{Miller:2020a}.

\begin{figure}
	\newcommand\Setup{
		\Axes(-2.5,-2.5)(2.5,2.5)
		\Grid[-2](-1){2}[-2](-1){2}
	}
	\subcaptionbox{$F\binom{1}{1} \into F\binom{-1}{-1}$\label{fig:flat-morphism}}{
		\begin{tikzpicture}[module diagram]
			\Setup
			\FreeModule[red](1,1){}
		\end{tikzpicture}
		$\into$
		\begin{tikzpicture}[module diagram]
			\Setup
			\FreeModule[red](1,1){}
			\FreeModule[blue](-1,-1){}
		\end{tikzpicture}
	}\hfill
	\subcaptionbox{$I\binom{1}{1} \into I\binom{-1}{-1}$\label{fig:inj-morphism}}{
		\begin{tikzpicture}[module diagram]
			\Setup
			\InjModule[blue](1,1){}
			\InjModule[red](-1,-1){}
		\end{tikzpicture}
		$\onto$
		\begin{tikzpicture}[module diagram]
			\Setup
			\InjModule[red](-1,-1){}
		\end{tikzpicture}
	}\hfill
	\subcaptionbox{$F\binom{-1}{-1} \to I\binom11$\label{fig:flat-injective-morphism}}{
		\begin{tikzpicture}[module diagram]
			\Setup
			\FreeModule[blue](0,0){}
			\filldraw[red,fill opacity=.2](-.5,-.5) rectangle (.5,.5);
		\end{tikzpicture}
		$\to$
		\begin{tikzpicture}[module diagram]
			\Setup
			\InjModule[blue](0,0){}
			\filldraw[red,fill opacity=.2](-.5,-.5) rectangle (.5,.5);
		\end{tikzpicture}
	}
	\caption{Morphisms between flat modules, injective modules, and from flat to injective modules.
		In all cases, the red area in the domain and codomain denotes the image of the morphism.}
	\label{fig:morphisms}
\end{figure}
We have $\Hom(F(z), F(z')) \cong \Hom(I(z), I(z')) \cong \begin{smallcases} \F & \text{if $z' \leq z$,} \\ 0 & \text{otherwise}\end{smallcases}$,
where a scalar $\lambda \in \F$ is identified with the injective morphism $F(z) \to F(z')$
and the surjective morphism $I(z) \to I(z')$ given by multiplication with $\lambda$ in nonzero components;
see also \zcref{fig:flat-morphism,fig:inj-morphism}.
Therefore, if $L$ and $M$ are both flat (or both injective) modules of finite generalized rank,
then after choosing ordered generalized bases,
we may identify $\Hom(L, M)$ with the vector space of valid graded matrices $U$ with $\rg^U = \rk L$ and $\cg^U = \rk M$.

Similarly, we have $\Hom(F(z'), I(z)) \cong \begin{smallcases} \F & \text{if $z' \leq z$} \\ 0 & \text{otherwise}\end{smallcases}$,
see also \zcref{fig:flat-injective-morphism}.
Thus, if $F$ is flat and $I$ injective, both of finite generalized rank,
then after choosing ordered generalized bases of both, we may identify $\Hom(F, I)$
with the vector space of anti-valid graded matrices $U$ with row grades $\rg^U = \rk I$ and $\cg^U = \rk F$.

\begin{example}
	Let $z_1 = (0,2)$, $z_2 = (1,1)$, $z_3 = (0,2)$ and $w_1 = (1,2)$, $w_2 = (2,1)$. Then
	\[
		\textstyle \Hom\bigl(\bigoplus_i F(w_i), \bigoplus_i F(z_i)\bigr)
		\cong \Hom\bigl(\bigoplus_i I(w_i), \bigoplus_i I(z_i)\bigr)
		\cong \biggl\{
			\begin{pNiceMatrix}[first-col,first-row,small]
				      & (1,2) & (2,1) \\
				(0,2) & *     & 0     \\
				(1,1) & *     & *     \\
				(2,1) & 0     & *
			\end{pNiceMatrix}
		\biggr\}
	\] and \[
		\textstyle \Hom\bigl(\bigoplus_i F(z_i), \bigoplus_i I(w_i)\bigr)
		\cong \biggl\{
			\begin{pNiceMatrix}[first-col,first-row,small]
				      & (0,2) & (1,1) & (2,0) \\
				(1,2) & *     & *     & 0     \\
				(2,1) & 0     & *     & *
			\end{pNiceMatrix}
		\biggr\},
	\]
	where $*$ stands for an arbitrary element from $\F$.
\end{example}

The \emph{graded transpose} $U^T$ of a graded matrix $U$ is the graded matrix $U^T$ with entries $\u{U^T} = \u{U}^T$,
row grades $\rg^{U^T}_i = -\cg^U_i$ and column grades $\cg^{U^T}_j = -\rg^U_j$.
For $z \in \Z^n$, the \emph{shift by $z$} of a graded matrix $U$ is the graded matrix $U\Shift{z}$ with $\u{U\Shift{z}} = \u{U}$,
row grades $\rg^{U\Shift{z}}_i = \rg^U_i - z$ and column grades $\cg^{U\Shift{z}}_j = \rg^U_j - z$,
where entries $\pm\infty$ stay invariant.
A graded matrix $U$ is valid if and only if $U^T$ and/or $U\Shift{z}$ is valid.
If the graded matrix $U$ represent a morphism $f\colon L \to M$ of (flat and/or injective) modules,
then $U\Shift{z}$ represents the morphism $f\Shift{z}\colon L\Shift{z} \to M\Shift{n}$.

\begin{lemma}[{\cite{companion-dualities}}]
	\label{thm:dualities-matrices}
	Let a graded matrix $U$ represent a morphism $f\colon L \to M$ of flat modules
	w.r.t.\ some ordered generalized bases $\beta$ and $\gamma$ of $L$ and $M$.
	Then $U^T$ represents the morphism $f^*\colon M^* \to L^*$ of injective modules
	and the morphism $f^\dagger\colon M^\dagger \to L^\dagger$ of flat modules
	w.r.t. the bases dual to $\gamma$ and $\beta$.
\end{lemma}

\paragraph*{Resolutions and presentations}
A \emph{flat \emph{(resp.\ \emph{free})} resolution} of a module $M \in \Pers{n}$
is a (potentially infinite) exact sequence $\dotsb \to F_1 \to F_0$ of flat (resp.\ free) modules,
such that there is an exact sequence $\dotsb \to F_1 \to F_0 \stackrel\varepsilon\to M \to 0$.
An \emph{injective \emph{(resp.\ \emph{cofree})} resolution} of $M$
is an exact sequence $I^0 \to I^1 \to \dotsb$ of injective (resp.\ cofree) modules,
such that there is an exact sequence $0 \to M \stackrel\eta\to I^0 \to I^1 \to \dotsb$.
The morphisms $\varepsilon$ and $\eta$ are called the \emph{augmentation morphisms} of the respective resolutions.
The morphism $F_1 \to F_0$ is also called a \emph{flat \emph{(resp.\ \emph{free})} presentation} of $M$,
and the morphism $I^0 \to I^1$ is also called an \emph{injective \emph{(resp.\ \emph{cofree})} presentation} of $M$.
Flat, free, injective or cofree presentations (or resolutions) can be represented by a valid graded matrix (resp.\ a sequence thereof).

The following gives a sufficient criterion for finite flat (even free) resolutions to exist.
The \emph{length} of a resolution $F_\bullet$ (or $I^\bullet$) is the maximal $\ell$ such that $F_\ell \neq 0$ (resp.\ $I^\ell \neq 0$).

\begin{theorem}[{Hilbert's Syzygy theorem \cite[Theorem~15.2]{Peeva:2011}, \cite[Corollary~19.7]{Eisenbud:1995}}]
	\label{thm:hilbert-syzygy}
	Every finitely generated module in $\Pers{n}$ has a free resolution of length at most $n$.
\end{theorem}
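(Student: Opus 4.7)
The plan is to deduce the bound from a Tor computation via the Koszul resolution of the residue field, combined with the existence of a minimal free resolution of any finitely generated $\Z^n$-graded module. Throughout, write $R = k[x_1,\dotsc,x_n]$ and identify $\Pers{n}$ with the category of $\Z^n$-graded $R$-modules.

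First I would construct, for a given finitely generated $M \in \Pers{n}$, a minimal graded free resolution $F_\bullet \to M$: lift a minimal set of homogeneous generators of $M$ to obtain a surjection $F_0 \to M$ with $\rk F_0 = \dim_k(M \otimes_R k)$, then iterate the procedure on $\ker(F_0 \to M)$, using that kernels of morphisms between finitely generated modules over the Noetherian ring $R$ are again finitely generated. Minimality ensures that the differentials of $F_\bullet$ vanish after tensoring with the residue field $k = R/(x_1,\dotsc,x_n)$, so that $\operatorname{Tor}^i_R(M, k) \cong F_i \otimes_R k$, and in particular $\rk F_i = \dim_k \operatorname{Tor}^i_R(M, k)$ for every $i \geq 0$.

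Next I would exhibit the Koszul complex $K_\bullet = \Lambda^\bullet R^n$ on $(x_1,\dotsc,x_n)$ as a graded free resolution of $k$ of length exactly $n$. Its exactness reduces to showing that $(x_1,\dotsc,x_n)$ is a regular sequence in $R$, which one proves by induction on $n$ from the observation that $x_i$ acts as a non-zero-divisor on $R/(x_1,\dotsc,x_{i-1})$. Recomputing $\operatorname{Tor}^i_R(M, k)$ as the homology of $M \otimes_R K_\bullet$ then shows that it vanishes for $i > n$, since $K_i = 0$ in those degrees. Combining both identifications yields $F_i = 0$ for $i > n$, so $F_\bullet$ has length at most $n$.

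The main obstacle is verifying the exactness of $K_\bullet$, i.e.\ regularity of the Koszul sequence; everything else is the routine graded transcription of standard commutative-algebra arguments. Some care is also needed to maintain the $\Z^n$-graded structure throughout the construction of the minimal free resolution, so that the resulting maps are morphisms in $\Pers{n}$ rather than merely $R$-linear.
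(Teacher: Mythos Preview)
Your argument is correct and is essentially the standard proof one finds in the references the paper cites (Eisenbud, Corollary~19.7; Peeva, Theorem~15.2). Note, however, that the paper does not supply its own proof of this theorem at all: it is stated as a classical result with external citations, so there is nothing to compare against beyond those sources. Your route via $\operatorname{Tor}^i_R(M,k)$, identifying it on one side with $F_i \otimes_R k$ through minimality and on the other with the homology of $M \otimes_R K_\bullet$ through the Koszul resolution of $k$, is precisely the argument in Eisenbud. The only caveat is organizational: in the paper the existence and uniqueness of minimal free resolutions is recorded as a \emph{corollary} of the syzygy theorem, whereas you invoke the existence of a minimal resolution as an input. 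This is not circular, since the iterative construction of a minimal resolution that you sketch does not use any finiteness of projective dimension, but it is worth flagging that you are relying on that construction independently rather than on the corollary as stated.
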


\begin{remark}
	\label{rmk:fd-length-n-resolution}
	Every resolution of a finite dimensional module in $\Pers{n}$ has as length at least $n$.
	This will follow from \zcref{thm:nu-resolutions} below.
\end{remark}

\begin{example}[Flat and injective resolutions]
	\label{ex:resolutions}
	Consider the indecomposable module
	\begin{equation}
		\label{eq:indec-module}
		M\ =\ \begin{tikzcd}[row sep=.9em, column sep=.9em, cramped, cells={font=\scriptsize}, baseline={(b.base)}]
			0 \rar     & 0 \rar                           & 0   \rar                                             & 0 \rar             & 0      \\
			0 \uar\rar & \F \uar\rar[equal]               & \F   \uar\rar[equal]                                 & \F \uar\rar        & 0 \uar \\
			0 \uar\rar & \F \uar[equal]\rar["\Mtx{0\\1}"] & \F^2 \uar\rar\ar[ur, phantom, "\scriptstyle{(1,1)}"] & \F \uar[equal]\rar &|[alias=b]| 0, \uar \\
			0 \uar\rar & 0 \uar\rar                       & \F   \uar["\Mtx{1\\0}"']\rar[equal]                  & \F \uar[equal]\rar & 0 \uar \\
			0 \uar\rar & 0 \uar\rar                       & 0 \uar\rar                                           & 0 \uar\rar         & 0 \uar
		\end{tikzcd}
	\end{equation}
	see also \zcref{fig:indec-module}.
	The sequence
	\def\I{\mathmakebox[\widthof{$F$}][r]{I}}
	\begin{equation}
		\label{eq:free-res}
		0 \to F\tbinom22 \oplus F\tbinom33
		\xto{\Mtx*[r]{0 & -1 \\ 1 & -1 \\ -1 & 0 \\ 0 & 1}} F\tbinom03 \oplus F\tbinom12 \oplus F\tbinom21 \oplus F\tbinom30
		\xto{\Mtx*[r]{1 & -1 & -1 & 0 \\ 0 & 1 & 1 & 1}} F\tbinom10 \oplus F\tbinom01
		\phantom{{}\to0}
	\end{equation}
	is a flat (even free) resolution of $M$; see \zcref{fig:free-res}. The sequence
	\begin{equation}
		\label{eq:inj-res}
		\phantom{0\to{}} \I\tbinom22 \oplus \I\tbinom33
		\xto{\Mtx*[r]{0 & -1 \\ 1 & -1 \\ -1 & 0 \\ 0 & 1}} \I\tbinom03 \oplus \I\tbinom12 \oplus \I\tbinom21 \oplus \I\tbinom30
		\xto{\Mtx*[r]{1 & -1 & -1 & 0 \\ 0 & 1 & 1 & 1}} \I\tbinom10 \oplus \I\tbinom01
		\to 0
	\end{equation}
	is an injective (even cofree) resolution of $M$; see \zcref{fig:inj-res}.
\end{example}

The following definition introduces a different notion of presentations, that mixes flat and injective modules:

\begin{figure}
	\tikzset{every module diagram/.append style={x=2.8mm,y=2.8mm}}%
	\newcommand\Setup{
		\Axes[->](-1.5,-1.5)(3.5,3.5)
		\Grid[-1](0){3}[-1](0){3}
	}%
	\centering
	\subcaptionbox{free resolution of $M$\label{fig:free-res}}{
		\begin{tikzpicture}[module diagram]
			\Setup
			\FreeModule[blue](3,3){};
			\FreeModule[blue](2,2){};
		\end{tikzpicture}
		$\into$
		\begin{tikzpicture}[module diagram]
			\Setup
			\FreeModule[blue](3,0){};
			\FreeModule[blue](2,1){};
			\FreeModule[blue](1,2){};
			\FreeModule[blue](0,3){};
		\end{tikzpicture}
		$\to$
		\begin{tikzpicture}[module diagram, remember picture]
			\Setup
			\FreeModule[blue](1,0){};
			\FreeModule[blue](0,1){};
			\coordinate (s) at (current bounding box.north east);
		\end{tikzpicture}
		$\stackrel\varepsilon\onto$%
	}%
	\subcaptionbox{$M$\label{fig:indec-module}}{%
		\begin{tikzpicture}[module diagram]
			\Setup
			\begin{scope}[blue]
				\filldraw[fill opacity=0.2] (.5,-.5) -| (2.5,2.5) -| (-.5,.5) -| cycle;
				\filldraw[fill opacity=0.2] (.5,.5) rectangle (1.5,1.5);
				\draw[->] (1,.25) to["$\binom10$" {right}] (1,.75);
				\draw[->] (.25,1) to["$\binom01$" {left}] (.75,1);
				\draw[->] (1.25,1.25) to["$\scriptstyle (1\:1)$" {above}] (1.75,1.75);
			\end{scope}
		\end{tikzpicture}%
	}%
	\subcaptionbox{cofree resolution \eqref{eq:inj-res} of $M$\label{fig:inj-res}}{%
		$\stackrel\eta\into$
		\begin{tikzpicture}[module diagram, remember picture]
			\Setup
			\begin{scope}[shift={(-1,-1)}]
				\InjModule[blue](3,3){};
				\InjModule[blue](2,2){};
			\end{scope}
			\coordinate (t) at (current bounding box.north west);
		\end{tikzpicture}
		\tikz[remember picture,overlay] \path (s) edge[->, bend left=5mm, shorten >=3pt, shorten <=3pt, font=\scriptsize, "$\phi$"] (t);
		$\to$
		\begin{tikzpicture}[module diagram]
			\Setup
			\begin{scope}[shift={(-1,-1)}]
				\InjModule[blue](3,0){};
				\InjModule[blue](2,1){};
				\InjModule[blue](1,2){};
				\InjModule[blue](0,3){};
			\end{scope}
		\end{tikzpicture}
		$\onto$
		\begin{tikzpicture}[module diagram]
			\Setup
			\begin{scope}[shift={(-1,-1)}]
				\InjModule[blue](1,0){};
				\InjModule[blue](0,1){};
			\end{scope}
		\end{tikzpicture}
	}
	\caption{The module $M$ from \eqref{eq:indec-module} in \zcref{ex:resolutions}, together with
		a \subref{fig:free-res} free, \subref{fig:inj-res} cofree resolution,
		and \subref{fig:indec-module} flat-injective presentation $\phi$ of $M$.}
\end{figure}

\begin{definition}[Flat-injective presentations {\cite[Definition~5.12]{Miller:2020a}}]
	A \emph{flat-injective presentation} of a module $M$ is a morphism $\phi\colon F \to I$ such that $F$ is flat, $I$ is injective and $M \cong \im \phi$.
\end{definition}

\begin{remark}\label{rmk:rank-invariant}
	For a $\Z^n$-graded matrix $U$, grades $z, z' \in \Z^n$ and binary relations $\diamond, \diamond' \in \{\leq, \geq\}$,
	define the ungraded matrix $U_{\diamond z, \diamond' z'} \coloneqq (U_{ij})_{\rg^U_i \diamond z, \cg^U_j \diamond' z'}$.
	Let $M \in \Pers{n}$ have totally finite dimension,
	and $U$, $V$ and $M$ be graded matrices representing a free, flat-injective and injective presentation of $M$, respectively, w.r.t.\ arbitrarily chosen bases.
	Then the structure morphisms of $M$ satisfy
	\[
	\im M_{z'z} \cong \coker U_{\geq z, \geq z'} \cong \im V_{\leq z', \geq z} \cong \ker W_{\leq z, \leq z'}.
	\]
	This also determines the components of $M$ since $M_z = \im M_{zz}$.
\end{remark}

\begin{example}[Continuation of \zcref{ex:resolutions}]
	\label{ex:resolutions:ctd}
	Composing the surjective augmentation morphism $\varepsilon\colon F_0 \onto M$ with the injective augmentation morphism $\eta\colon M \into I^0$
	gives a morphism $\phi \coloneqq \eta\epsilon\colon F_0 \to I^0$ whose image is isomorphic to $M$;
	see \zcref{fig:indec-module}.
	With respect to the generalized bases underlying \zcref{eq:free-res,eq:inj-res}, it is represented by the anti-valid graded matrix
	\begin{equation}
		\label{eq:flat-injective-res}
		\Phi = \begin{pNiceMatrix}[first-col,first-row]
			      & (1,0) & (0,1)        \\
			(1,1) & -1    & \phantom{-}0 \\
			(2,2) & -1    & -1
		\end{pNiceMatrix}.
	\end{equation}
	To see that this is a flat-injective presentation of $M$, consider the module $\im \phi$, whose components are the vector spaces
	\(
		(\im \phi)_z = \im\bigl((\Phi_{ij})_{\cg^U_j \leq z \leq \rg^U_i}\bigr)
	\),
	so $\im \phi$ is the module
	\[
		\tikzcdset{
			diagrams={row sep={2.6em,between origins},column sep={1.5em},cramped,}
		}
		\begin{tikzcd}
			       & [-2ex] 0                   & 0                                     & 0                        & [-2ex]  \\[-1.75ex]
			0 \rar & \im(-1) \rar\uar           & \im({-1}\:{-1}) \rar\uar              & \im({-1}\:{-1}) \rar\uar & 0       \\
			0 \rar & \im\Mtx*[r]{0\\-1}\rar\uar & \im\Mtx*[r]{-1 & 0\\-1 & -1} \rar\uar & \im({-1}\:{-1})\uar\rar  & 0       \\
			0 \rar & \im() \rar \uar            & \im\Mtx*[r]{-1\\-1} \rar\uar          & \im(-1) \rar\uar         & 0       \\[-1.75ex]
			       & 0 \uar                     & 0 \uar                                & 0 \uar                   &
		\end{tikzcd}
		\qquad
		\cong
		\qquad
		\begin{tikzcd}
			       & [-2ex] 0                        & 0                                                       & 0                 & [-2ex] \\[-1.75ex]
			0 \rar & \F\uar\rar[equal]               & \F \uar\rar[equal]                                      & \F\uar\rar        & 0      \\
			0 \rar & \F\uar[equal]\rar["\Mtx{0\\1}"] & \F^2 \ar[ur, phantom, "\scriptstyle (1\:1)"'] \uar \rar & \F\uar[equal]\rar & 0      \\
			0 \rar & 0\uar\rar                       & \F\uar["\Mtx{1\\0}"]\rar[equal]                         & \F\uar[equal]\rar & 0      \\[-1.75ex]
			       & 0 \uar                          & 0 \uar                                                  & 0 \uar            &
		\end{tikzcd}
	\]
	which is $M$ indeed.
	Here, $()$ denotes an empty matrix, which has $\im () = 0$.
\end{example}

\paragraph*{Homotopy equivalence}
A morphism $f_\bullet\colon C_\bullet \to D_\bullet$ of chain complexes is called a \emph{quasi\-/isomorphism} if the induced morphism $H_d(f_\bullet)$ in homology is an isomorphisms for every $d$.
In particular, viewing a module $M$ as chain complex concentrated in degree zero,
the augmentation morphism of a free (injective) resolution $F_\bullet$ (resp.\ $I^\bullet$) of $M$
is a quasi-isomorphism $F_\bullet \to M$ (resp.\ $M \to I^\bullet$) of chain complexes.
A chain complex $C_\bullet$ is \emph{acyclic} $H_i(C_\bullet) = 0$ for all $i$.
Two chain complexes $C_\bullet$, $D_\bullet$ are \emph{quasi-isomorphic} if there is are two quasi-isomorphisms $C_\bullet \leftarrow Z_\bullet \to D_\bullet$ for some complex $Z_\bullet$.

Two morphisms $f_\bullet, g_\bullet\colon C_\bullet \to D_\bullet$ of chain complexes are \emph{homotopic}
if there is a collection $s_\bullet = (s_d)_{d \in \Z}$ of morphisms $s_d\colon C_d \to D_{d+1}$ of modules,
such that $f_d-g_d = \partial^{D}_{d+1} s_d + s_d \partial^{C}_d$ for all $d$.
The collection $s_\bullet$ is called a called a \emph{homotopy} from $f_\bullet$ to $g_\bullet$.

Two morphisms $f_\bullet\colon C_\bullet \to D_\bullet$ and $g_\bullet\colon D_\bullet \to C_\bullet$ form a pair of mutually inverse \emph{homotopy equivalences}
if $f_\bullet g_\bullet$ and $\id_{D_\bullet}$ are homotopic, and $g_\bullet f_\bullet$ and $\id_{C_\bullet}$ are homotopic.
A chain complex $C_\bullet$ is \emph{contractible} if $C_\bullet \to 0$ is a homotopy equivalence;
in this case, a chain homotopy from $\id_{C_\bullet}$ to $0\colon C_\bullet \to C_\bullet$ is a \emph{contraction}.
In other words, a contraction is a collection $s_\bullet$ of morphism such that $\id_{C_d} = \partial^C_{d+1} s_d + s_{d-1} \partial^C_d$.
Every homotopy equivalence is a quasi-isomorphism; in particular, every contractible chain complex is acyclic.

The following is standard; we include a proof for reference.

\begin{lemma}
	\label{rmk:obtaining-contraction}
	Every quasi-isomorphism between bounded below complexes of flat modules is a chain homotopy equivalence.
	In particular, every acyclic bounded below complex of flat modules is contractible.
\end{lemma}
\begin{proof}
	If $C_\bullet$ is a bounded below chain complex of free modules that is known to be acyclic,
	then a chain contraction $s_\bullet$ of $C_\bullet$ can be obtained as follows.
	Assuming w.l.o.g.\ that $C_d = 0$ for $d < 0$,
	the assumed acyclicity of $C$ implies that $\partial_1$ is surjective.
	Choosing preimages of a basis of $C_0$ defines a morphism $s_0\colon C_0 \to C_1$.
	For $d > 0$, the assumption implies that for any basis element $e_i \in C_d$, we have $\partial_{d+1}^{-1}(e_i + \im s_{d-1}) \neq 0$.
	Therefore, choosing preimages for all $e_i$ defines a morphism $s_d\colon C_d \to C_{d+1}$.
	One checks easily that $s_\bullet$ is a contraction of $C_\bullet$.
\end{proof}

\paragraph*{Minimality}
A chain complex $C_\bullet$ of flat modules is called \emph{trivial} if it is isomorphic to a complex of the form $\dotsb \to 0 \to F(z) \xto{\cong} F(z) \to 0 \to \dotsb$,
and analogously for cochain complexes of injective modules.
A (co)chain complex of flat (or injective) is called \emph{minimal} if it does not contain any trivial complex as a direct summand.
A flat or injective resolution is called \emph{minimal} if is minimal as (co)chain complex.
A flat presentation $F_1 \to F_0 \to M$ is \emph{minimal} if it extends to a minimal flat resolution.

Recall that a graded matrix $U$ is valid if $U_{ij} \neq 0$ only if $\rg^U_i \leq \cg^U_j$.
It is \emph{minimal} if $U_{ij} \neq 0$ only if $\rg^U_i < \cg^U_j$.
The following observation is immediate:

\begin{lemma}
	A (co)chain complex of finite rank flat (resp.\ injective) modules is minimal
	if and only if the valid graded matrices representing its (co)boundary morphisms (with respect to any basis) are minimal.
\end{lemma}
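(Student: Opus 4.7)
The plan is to reduce both directions to a single invariance observation: for a graded matrix $U$, the existence of a \emph{local} entry (a nonzero $U_{ij}$ with $\rg^U_i = \cg^U_j$) does not depend on the chosen basis. To see this, note that any graded automorphism $P$ of a free module $\bigoplus_k F(\cg_k)$ has $P_{kj} \neq 0$ only if $\cg_k \leq \cg_j$, so its restriction $P_q$ to generators of a fixed grade $q$ is an invertible scalar matrix. Writing $U_q \coloneqq (U_{ij})_{\rg_i = \cg_j = q}$ for the ``local-at-$q$'' block of $U$, any term $U_{ik} P_{kj}$ contributing to $(UP)_{ij}$ with $\rg_i = \cg_j = q$ satisfies $q = \rg_i \leq \cg_k \leq \cg_j = q$, so $\cg_k = q$. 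Hence $(UP)_q = U_q P_q$, and invertibility of $P_q$ gives $U_q \neq 0 \Leftrightarrow (UP)_q \neq 0$. A symmetric argument applies on the left.

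Given this invariance, the direction ``complex minimal implies matrices minimal in every basis'' follows by contraposition: if $C_\bullet = B_d(a, q) \oplus C'_\bullet$, any basis adapted to the decomposition produces a $1 \times 1$ diagonal block $a$ at row and column grade $q$ in $D_d$, and invariance lifts this to every basis.

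For the converse, I would extract a trivial summand from a local entry. Suppose $U_{ij} = a \neq 0$ is a local entry of $D_d$ at $\rg_i = \cg_j = q$, with corresponding basis elements $e_j \in C_d$ and $f_i \in C_{d-1}$. Set $g \coloneqq \partial_d(e_j) \in C_{d-1}$ and consider the morphism $\phi \colon F(q) \to C_{d-1}$, $1 \mapsto g$, together with the retraction $r \colon C_{d-1} \to F(q)$ given by $f_i \mapsto a^{-1}$ and $f_l \mapsto 0$ for $l \neq i$; $r$ is a valid graded morphism because $\rg_i = q$ while the other summands $F(\rg_l)$ either have $\rg_l \not\geq q$ or are mapped to zero. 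Then $r \phi = \id_{F(q)}$, yielding $C_{d-1} = F(q) \cdot g \oplus \ker r$, and setting $C_d' \coloneqq \ker(r \circ \partial_d)$ gives $C_d = F(q) \cdot e_j \oplus C_d'$ with $\partial_d(C_d') \subseteq \ker r$ by construction. The identity $\partial_{d-1}(g) = \partial_{d-1} \partial_d(e_j) = 0$ ensures that $F(q) \cdot g$ is annihilated by the next boundary, so $C_\bullet$ splits as $B_d(a, q) \oplus C'_\bullet$. The main obstacle will be this third step: carefully verifying that $r$ is a well-defined graded morphism, that $r\phi$ is the identity, and that the induced splittings are compatible with the boundary maps in every degree (not just the $d$-th).
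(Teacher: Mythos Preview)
The paper states this lemma without proof, both in the background section and again in the appendix, treating it as a standard fact; so there is no argument to compare against. Your proposal is correct in substance and would make a perfectly good proof, but two points are worth flagging.

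First, you have the two directions swapped. In step~2 you claim to prove ``complex minimal $\Rightarrow$ matrices minimal'' by contraposition, but what you actually argue is ``complex not minimal $\Rightarrow$ matrices not minimal,'' which is the contrapositive of the \emph{other} direction. Your step~3 then proves ``local entry $\Rightarrow$ trivial summand,'' i.e., the contrapositive of ``complex minimal $\Rightarrow$ matrices minimal.'' Both implications are there, just mislabelled. Also, ``invertible scalar matrix'' is misleading: $P_q$ is an invertible $k$-matrix, not a scalar multiple of the identity; you only need invertibility, and that follows from applying your block argument to $P P^{-1} = \id$.

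Second, your step~3 is precisely the mechanism the paper packages in the very next lemma (\cref{thm:minimization step}\ref{thm:minimization step:a}): a local entry gives an invertible $1\times 1$ block $a$, and one writes down the explicit base change splitting off $B_d(a,q)$. Your retraction $r$ and the splitting $C_d = F(q)\,e_j \oplus \ker(r\partial_d)$ amount to the same row and column operations. The checks you list as the ``main obstacle'' (that $r$ is graded because $\rg_i = q$; that $\partial_{d+1}(C_{d+1}) \subseteq \ker(r\partial_d)$ since $r\partial_d\partial_{d+1}=0$; that $\partial_{d-1}g=0$) all go through without difficulty, and the kernels are visibly free since removing one basis vector or performing a triangular change of basis preserves freeness.
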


\begin{example}
	The chain complex $C_\bullet$ given by the solid arrows in the first line of the following diagram is not minimal;
	the offending matrix entries are printed in bold.
	It is homotopy equivalent to the minimal chain complex $D_\bullet$ in the second line,
	and the vertical arrows form a pair of mutually inverse chain homotopy equivalences $f_\bullet$ and $g_\bullet$:
	\[
		\begin{tikzcd}[ampersand replacement=\&, column sep=large, row sep=normal]
			C_\bullet\colon\dar[shift left]{f_\bullet} \&[-3ex] F\tbinom22 \rar[swap]{\Mtx{1 \\ \bm{1} \\ 1}} \dar[shift left] \& F\tbinom11 \oplus F\tbinom22 \oplus F\tbinom22 \rar[swap]{\Mtx*[r]{0  & -1 & 1 \\ -1 & 1 & 0 \\ \bm{1} & 0 & -1}}\lar[bend right=15px, dashed, swap, end anchor=north east, start anchor=north west]{\Mtx{0&1&0}} \dar[shift left]{\Mtx{0&-1&1}}    \&[4ex] F\tbinom10 \oplus F\tbinom01 \oplus F\tbinom11  \dar[shift left]{\Mtx{1&0&0\\0&1&0}} \lar[bend right=15px, dashed, swap, end anchor=north east, start anchor=north west]{\Mtx{0&0&1\\0&0&0\\0&0&0}}\\
			D_\bullet\colon\uar[shift left]{g_\bullet} \&       0 \rar                                        \uar[shift left] \& F\tbinom22 \rar[swap]{\Mtx*[r]{1\\-1}}                                                                                                                                                                            \uar[shift left]{\Mtx{1\\0\\1}}   \&      F\tbinom10 \oplus F\tbinom01.                   \uar[shift left]{\Mtx{1&0\\0&1\\0&0}}
		\end{tikzcd}
	\]
	That $f_\bullet$ and $g_\bullet$ are mutually inverse chain homotopy equivalences is exhibited by
	the chain homotopy $s_\bullet$ with $\partial^C s+s\partial^C = \id_C-gf$ (dashed arrows).
	For the other composition, we have $f_\bullet g_\bullet = \id_{D_\bullet}$.
\end{example}

\begin{proposition}[{\cite[\S3]{FugacciKerberEtAl:2023}}]
	For every bounded-below chain complex $C_\bullet$ of finite-rank flat modules,
	there is a unique (up to isomorphism) minimal chain complex $\tilde{C}_\bullet$ and trivial chain complex $T_\bullet$
	such that $C_\bullet \cong \tilde{C}_\bullet \oplus T_\bullet$.
\end{proposition}

The projection and inclusion $C_\bullet \rightleftarrows \tilde{C}_\bullet$ are homotopy equivalences.

\begin{corollary}[{\cites[Theorem~20.2]{Eisenbud:1995}[Theorem~7.5]{Peeva:2011}}]
	Every finitely generated module $M \in \Pers{n}$ has a unique (up to isomorphism) minimal free resolution $F_\bullet$.
	Every free resolution of $M$ is isomorphic to $F_\bullet \oplus T_\bullet$ for a trivial complex $T_\bullet$.
\end{corollary}

The minimal free resolution of a finite dimensional module in $\Pers{n}$ has length precisely $n$;
cf.~\zcref{rmk:fd-length-n-resolution}.

\paragraph*{Dualities revisited}
Recall from \hyperref[sec:duals]{above} the Matlis dual $(-)^*$ and the global dual $(-)^\dagger$.
Recall that $(-)^*$ sends flat modules to injectives and vice versa, and $(-)^\dagger$ sends flat modules to flat modules.
Let $\one \coloneqq (1,\dotsc,1) \in \Z^n$.
The \emph{Nakayama functor} is the covariant functor $\nu$ with $\nu M \coloneqq (M^\dagger)^*\Shift{\one}$.
Let $\Proj{\Pers{n}}$ and $\Inj{\Pers{n}}$ be the full subcategories of $\Pers{n}$ consisting of finite rank flat and finite rank injective modules, respectively.
The Nakayama functor restricts to an equivalence of categories $\Proj{\Pers{n}} \to \Inj{\Pers{n}}$,
with quasi-inverse $\nu' M \coloneqq (M\Shift{-\one}^*)^\dagger$.
If $M$ is flat of finite generalized rank, then $\nu M$ is injective of generalized rank $\Set{-z-\one; z \in \rk M}$.

We call a chain complex $C_\bullet$ \emph{eventually acyclic} if $H_d(C_\bullet)$ is finite dimensional for all $d$.
If $C_\bullet$ is a chain complex, denote by $C_\bullet[i]$ the shifted chain complex with $(C_\bullet[i])_j = C_{i+j}$.
The following theorem is a special case of a graded version Greenlees--May duality:

\begin{theorem}[{\cite{companion-dualities}}]
	\label{thm:local-duality}
	If $C_\bullet$ is an eventually acyclic chain complex of finite rank free modules,
	then $C_\bullet$ and $\nu C_\bullet\HShift{n}$ are naturally quasi-isomorphic.
\end{theorem}

Let $F_\bullet\colon \dotsb F_2 \to F_1 \to F_0$ be a flat resolution of some module $M$.
Applying the contravariant functor $(-)^*$ to $F_\bullet$ yields a sequence $(F_\bullet)^*\colon F_0^* \to F_1^* \to F_2^* \to \dotsb$ of injective modules.
Exactness of $(-)^*$ implies that $(F_\bullet)^*$ is an injective resolution of $M^*$.
Observe that we may naturally view a chain complex $C_\bullet$ as a cochain complex $C^\bullet$ with $C^d = C_{-d}$.
Then \zcref{thm:local-duality} has the following corollary.

\begin{corollary}[{\cites{companion-dualities}[cf.][Example~14.5.18]{BrodmannSharp:2012}}]
	\label{thm:nu-resolutions}
	If $F_\bullet$ is a free resolution of a finite dimensional module $M \in \Pers{n}$,
	then $\nu F_\bullet\HShift{n}$ is an injective resolution of $M$.
\end{corollary}

Per \zcref{thm:dualities-matrices}, the injective resolution $\nu F_\bullet\HShift{n}$ of $M$
is represented (for suitable bases) by the same graded matrices (up to a grading shift by $\one$) as $F_\bullet$.

\begin{example}
	Recall the module $M$ from \zcref{ex:resolutions}.
	Its free and injective resolutions \zcref{eq:free-res,eq:inj-res} are mapped to each other by $\nu$ and $\nu'$.
	Thus, they can be represented by the same graded matrices,up to a degree shift by $\one$.
\end{example}

\begin{remark}
	\label{rmk:fd-necessary}
	Finite dimensionality is crucial for \zcref{thm:nu-resolutions}:
	The module
	\[
		N\ =\ \begin{tikzcd}[row sep=.9em, column sep=.9em, cramped, cells={font=\scriptsize}, baseline={(b.base)}]
			\vdots     & \vdots                           & \vdots                                               & \vdots                    &         \\
			0 \uar\rar & \F \uar[equal]\rar[equal]        & \F   \uar[equal]\rar[equal]                          & \F \uar[equal]\rar[equal] & \cdots  \\
			0 \uar\rar & \F \uar[equal]\rar["\Mtx{0\\1}"] & \F^2 \uar\rar\ar[ur, phantom, "\scriptstyle{(1,1)}"] & \F \uar[equal]\rar[equal] &|[alias=b]| \cdots  \\
			0 \uar\rar & 0 \uar\rar                       & \F   \uar["\Mtx{1\\0}"']\rar[equal]                  & \F \uar[equal]\rar[equal] & \cdots  \\
			0 \uar\rar & 0 \uar\rar                       & 0 \uar\rar                                           & 0 \uar\rar                & \cdots
		\end{tikzcd}
	\]
	depicted in \zcref{fig:indec-module-unbounded} has the free and injective resolution
	\begin{figure}
		\tikzset{every module diagram/.append style={x=3.5mm,y=3.5mm,baseline={(b)}}}%
		\newcommand\Setup{
			\Axes[->](-1.5,-1.5)(2.5,2.5)
			\Grid[-1](0){2}[-1](0){2}
			\coordinate (b) at (0,0.25);
			\path[use as bounding box] (ninf) -- (inf);
		}%
		\centering
		\subcaptionbox{\label{fig:free-res-unbounded}}{
			\begin{tikzpicture}[module diagram]
				\Setup
				\FreeModule[blue](2,2){};
			\end{tikzpicture}
			$\into$
			\begin{tikzpicture}[module diagram]
				\Setup
				\FreeModule[blue](2,1){};
				\FreeModule[blue](1,2){};
			\end{tikzpicture}
			$\to$
			\begin{tikzpicture}[module diagram]
				\Setup
				\FreeModule[blue](0,-1){};
				\FreeModule[blue](-1,0){};
			\end{tikzpicture}
			$\stackrel\varepsilon\onto$%
		}%
		\subcaptionbox{\label{fig:indec-module-unbounded}}{%
			\begin{tikzpicture}[module diagram]
				\Setup
				\begin{scope}[blue]
					\fill[fill opacity=0.2] (.5,-.5) coordinate (c1) -| (inf) -| (-.5,.5) coordinate (c2) -| cycle;
					\draw (c1 -| inf) -- (c1) |- (c2) -- (c2 |- inf);
					\filldraw[fill opacity=0.2] (.5,.5) rectangle (1.5,1.5);
					\draw[->] (1,.25) to["$\binom10$" {right}] (1,.75);
					\draw[->] (.25,1) to["$\binom01$" {left}] (.75,1);
					\draw[->] (1.25,1.25) to["$\scriptstyle (1\:1)$" {above}] (1.75,1.75);
				\end{scope}
			\end{tikzpicture}%
		}%
		\subcaptionbox{\label{fig:inj-res-unbounded}}{%
			$\stackrel\eta\into$
			\begin{tikzpicture}[module diagram]
				\Setup
				\InjModule<x,y>[blue](3,3){};
				\InjModule[blue](1,1){};
			\end{tikzpicture}
			$\to$
			\begin{tikzpicture}[module diagram]
				\Setup
				\InjModule<x>[blue](2,-1){};
				\InjModule[blue](1,0){};
				\InjModule[blue](0,1){};
				\InjModule<y>[blue](-1,2){};
			\end{tikzpicture}
			$\onto$
			\begin{tikzpicture}[module diagram]
				\Setup
				\InjModule[blue](0,-1){};
				\InjModule[blue](-1,0){};
			\end{tikzpicture}
		}
		\caption{An \subref{fig:free-res-unbounded} flat and \subref{fig:inj-res-unbounded} injective resolution
			of \subref{fig:indec-module-unbounded} the module $N$ from \eqref{rmk:fd-necessary}.}
		\label{fig:thm-fails-for-non-fd}
	\end{figure}
	depicted in \zcref{fig:free-res-unbounded,fig:inj-res-unbounded}.
	We see that both resolutions have different generalized rank.
\end{remark}

\section{Colimits and the Čech complex}
If $F_\bullet$ is an eventually acyclic complex of free modules, then 
according to \zcref{thm:local-duality}, the complexes $F_\bullet$ and $\nu F_\bullet[n]$ are quasi-isomorphic,
i.e., there exists a complex $\tilde{\Omega}_\bullet$ such that there are quasi-isomorphisms $F_\bullet \xleftarrow{\simeq} \tilde{\Omega}_\bullet \xto{\simeq} \nu C_\bullet[n]$.
In this section, we will introduce a suitable complex $\tilde{\Omega}_\bullet$, which is also used in the proof of \zcref{thm:local-duality}.
We will later see that for this $\tilde{\Omega}_\bullet$, the quasi-isomorphism $\tilde{\Omega}_\bullet \to F_\bullet$ is a homotopy equivalence.

\subsection{(Co)limit constructions}
For $n \in \N$, let $\IntSet{n} \coloneqq \{1,\dotsc,n\}$,
and for $k \in \N$, let $\binom{\IntSet{n}}{k} = \Set{Q \subseteq \IntSet{n}; \abs{Q} = k}$.

\begin{definition}
	\label{def:colim}
	For $Q \subseteq \IntSet{n}$,
	let \(p_Q\colon \Z^{n} \to \Z^{n-\abs{Q}}\) be the function that forgets the components indexed by $Q$.
	We define the functors
	\begin{alignat*}{4}
		\lim_Q   & \colon & \Pers{n}           & \to \Pers{(n-\abs{Q})},\qquad\qquad & (\lim_Q M)_z   & \coloneqq \lim_{w \in p_Q^{-1}(z)} M_w,   \\
		\Delta_Q & \colon & \Pers{(n-\abs{Q})} & \to \Pers{n},                       & (\Delta L)_w   & \coloneqq L_{p_Q(w)},                     \\
		\colim_Q & \colon & \Pers{n}           & \to \Pers{(n-\abs{Q})},             & (\colim_Q M)_z & \coloneqq \colim_{w \in p_Q^{-1}(z)} M_w,
	\end{alignat*}
	The structure morphisms of $\lim_Q M$, $\Delta_Q N$ and $\colim_Q M$ are defined analogously.
	We further define $\Lim_Q \coloneqq \Delta_Q \lim_Q$, $\Colim_Q \coloneqq \Delta_Q \colim_Q$.
\end{definition}

We will not need the functors $\lim_Q$ and $\Lim_Q$ in this paper; we include them nevertheless for completeness.
There are natural morphisms
\[
	\Lim_Q M \xto{\varepsilon^Q_M} M \xto{\eta^Q_M} \Colim_Q M
\]
for every $Q$ and $M$.
Further, for $Q \subseteq Q'$, there is a natural morphism $\rho^{Q',Q}_M\colon \Colim_Q M \to \Colim_{Q'} M$.

\begin{example}
	Let $n = 3$, let $M \in \Pers{3}$ be a module, and let $Q = \{2\}$.
	Then $\colim_Q M$ is the module in $\Pers{2}$ with $M_{(z_1,z_3)} = \colim_{w \in \Z} M_{(z_1,w,z_3)}$,
	and $\Colim_Q M$ is the module in $\Pers{2}$ with $M_{(z_1,z_2,z_3)} = \colim_{w \in \Z} M_{(z_1,w,z_3)}$,
	irrespective of the value of $z_2$.
\end{example}

\begin{lemma}
	\leavevmode
	\begin{enumerate}
		\item The pairs $(\lim_Q, \Delta_Q)$ and $(\Delta_Q, \colim_Q)$ both are adjoint pairs of functors.
		\item \label{thm:colim-exact} The functors $\Delta_Q$, $\colim_Q$ and $\Colim_Q$ are exact.
		\item The functors $\lim_Q$ and $\Lim_Q$ are exact when restricted to modules $M$ with $M_z$ constant for $z$ sufficiently small.
	\end{enumerate}
\end{lemma}
\begin{proof}
	The adjunctions $(\lim_Q, \Delta_Q)$ and $(\Delta_Q, \colim_Q)$ are induced by the respective adjunctions of (co)limits of vector spaces.
	Exactness of $\Delta_Q$ follows from $\Delta_Q$ being both a left and right adjoint.
	Exactness of $\colim_Q$ and $\Colim_Q$ follows because $\colim_Q$ is a colimit over the directed system $\Z^{\abs{Q}}$ of modules in $\Pers{(n-\abs{Q})}$.
	Exactness of $\lim_Q$ and $\Lim_Q$ follows from the Mittag-Leffler condition.
\end{proof}

\begin{figure}
	\subcaptionbox{$\lim_{\{1\}}$ and $\lim_{\{2\}}$\label{fig:lim-preserves-inj}}{
		\begin{tikzpicture}[module diagram]
			\Axes[shorten <=-3pt, shorten >=-3pt, ->](-2.5,-2.5)(2.5,2.5)
			\Grid[-2](-1){2}[-2](-1){2}
			\coordinate (g1) at (1,-1);
			\coordinate (g2) at (-1,1);
			\coordinate (g3) at (0,0);
			\InjModule[blue](g1){}
			\InjModule[blue](g2){}
			\InjModule<x>[blue](g3){}
			\begin{scope}[red]
				\scoped[transform canvas={shift={(0,-1.00)}}] \draw[black, ->, shorten <=-3pt, shorten >=-3pt] (ninf) -- (ninf -| inf);
				\scoped[transform canvas={shift={(0,-1.50)}}] \draw[thick] ([shift={(.5,0)}]g1 |- ninf) -- (ninf);
				\scoped[transform canvas={shift={(0,-1.75)}}] \draw[thick] ([shift={(.5,0)}]g2 |- ninf) -- (ninf);
				\scoped[transform canvas={shift={(0,-2.00)}}] \draw[thick] (ninf -| inf) -- (ninf);
				\scoped[transform canvas={shift={(-1.00,0)}}] \draw[black, ->, shorten <=-3pt, shorten >=-3pt] (ninf) -- (ninf |- inf);
				\scoped[transform canvas={shift={(-1.50,0)}}] \draw[thick] ([shift={(0,.5)}]g2 -| ninf) -- (ninf);
				\scoped[transform canvas={shift={(-1.75,0)}}] \draw[thick] ([shift={(0,.5)}]g1 -| ninf) -- (ninf);
				\scoped[transform canvas={shift={(-2.00,0)}}] \draw[thick] ([shift={(0,.5)}]g3 -| ninf) -- (ninf);
			\end{scope}
			\path[use as bounding box] (-5.5,-4.5) to (5.5,4.5);
		\end{tikzpicture}
	}\hfill
	\subcaptionbox{$\Delta_{\{1\}}$\label{fig:diag-preserves-inj}}{
		\begin{tikzpicture}[module diagram]
			\Axes[shorten <=-3pt, shorten >=-3pt, ->](-2.5,-2.5)(2.5,2.5)
			\Grid[-2](-1){2}[-2](-1){2}
			\coordinate (g1) at (1,-1);
			\coordinate (g2) at (-1,1);
			\coordinate (g3) at (0,0);
			\InjModule<x>[red](g1){}
			\InjModule<x>[red](g2){}
			\InjModule<x>[red](g3){}
			\begin{scope}[blue]
				\scoped[transform canvas={shift={(1.00,0)}}] \draw[black, ->, shorten <=-3pt, shorten >=-3pt] (ninf -| inf) -- (inf);
				\scoped[transform canvas={shift={(1.50,0)}}] \draw[thick] ([shift={(0,.5)}]g2 -| inf) -- (ninf -| inf);
				\scoped[transform canvas={shift={(1.75,0)}}] \draw[thick] ([shift={(0,.5)}]g1 -| inf) -- (ninf -| inf);
				\scoped[transform canvas={shift={(2.00,0)}}] \draw[thick] ([shift={(0,.5)}]g3 -| inf) -- (ninf -| inf);
			\end{scope}
			\path[use as bounding box] (-2.5,-4.5) to (4.5,4.5);
		\end{tikzpicture}
	}\hfill
	\subcaptionbox{$\Delta_{\{2\}}$\label{fig:diag-preserves-flat}}{
		\begin{tikzpicture}[module diagram]
			\Axes[shorten <=-3pt, shorten >=-3pt, ->](-2.5,-2.5)(2.5,2.5)
			\Grid[-2](-1){2}[-2](-1){2}
			\coordinate (g1) at (1,-1);
			\coordinate (g2) at (-1,1);
			\coordinate (g3) at (0,0);
			\FreeModule<y>[red](g1){}
			\FreeModule<y>[red](g2){}
			\FreeModule<xy>[red](g3){}
			\begin{scope}[blue]
				\scoped[transform canvas={shift={(0,1.00)}}] \draw[black, ->, shorten <=-3pt, shorten >=-3pt] (ninf |- inf) -- (inf);
				\scoped[transform canvas={shift={(0,1.50)}}] \draw[thick] ([shift={(-.5,0)}]g1 |- inf) -- (inf);
				\scoped[transform canvas={shift={(0,1.75)}}] \draw[thick] ([shift={(-.5,0)}]g2 |- inf) -- (inf);
				\scoped[transform canvas={shift={(0,2.00)}}] \draw[thick] (ninf |- inf) -- (inf);
			\end{scope}
			\path[use as bounding box] (-2.5,-4.5) to (2.5,4.5);
		\end{tikzpicture}
	}\hfill
	\subcaptionbox{$\colim_{\{1\}}$ and $\colim_{\{2\}}$\label{fig:colim-preserves-flat}}[\widthof{(a) $\colim_{\{1\}}$ and $\colim_{\{2\}}$}]{
		\begin{tikzpicture}[module diagram]
			\Axes[shorten <=-3pt, shorten >=-3pt, ->](-2.5,-2.5)(2.5,2.5)
			\Grid[-2](-1){2}[-2](-1){2}
			\coordinate (g1) at (1,-1);
			\coordinate (g2) at (-1,1);
			\coordinate (g3) at (0,0);
			\FreeModule[blue](g1){}
			\FreeModule[blue](g2){}
			\FreeModule<x>[blue](g3){}
			\begin{scope}[red]
				\scoped[transform canvas={shift={(0,1.00)}}] \draw[black, ->, shorten <=-3pt, shorten >=-3pt] (ninf |- inf) -- (inf);
				\scoped[transform canvas={shift={(0,1.50)}}] \draw[thick] ([shift={(-.5,0)}]g1 |- inf) -- (inf);
				\scoped[transform canvas={shift={(0,1.75)}}] \draw[thick] ([shift={(-.5,0)}]g2 |- inf) -- (inf);
				\scoped[transform canvas={shift={(0,2.00)}}] \draw[thick] (ninf |- inf) -- (inf);
				\scoped[transform canvas={shift={(1.00,0)}}] \draw[black, ->, shorten <=-3pt, shorten >=-3pt] (ninf -| inf) -- (inf);
				\scoped[transform canvas={shift={(1.50,0)}}] \draw[thick] ([shift={(0,-.5)}]g2 -| inf) -- (inf);
				\scoped[transform canvas={shift={(1.75,0)}}] \draw[thick] ([shift={(0,-.5)}]g1 -| inf) -- (inf);
				\scoped[transform canvas={shift={(2.00,0)}}] \draw[thick] ([shift={(0,-.5)}]g3 -| inf) -- (inf);
			\end{scope}
			\path[use as bounding box] (-5.5,-4.5) to (5.5,4.5);
		\end{tikzpicture}
	}
	\caption{
		Images (red) of some modules (blue) under $\colim_Q$, $\lim_Q$ and $\Delta_Q$.
		\subref{fig:lim-preserves-inj} $\lim_Q$ preserves injective and cofree modules,
		\subref{fig:colim-preserves-flat} $\colim_Q$ preserves flat and free modules, and
		\subref{fig:diag-preserves-flat}, {fig:diag-preserves-inj} $\Delta_Q$ preserves both flat and injective modules.}
	\label{fig:lim-colim-diag-preserves}
\end{figure}

\begin{remark}
	The functors defined above map free, flat, injective and cofree modules to:
	\begin{center}
		\begin{tabular}{c|ccccc}
			\toprule
			          & $\lim_Q$  & $\Lim_Q$  & $\Delta_Q$ & $\Colim_Q$  & $\colim_Q$  \\
			\midrule
			  free    &     0     &     0     &    flat    &    flat     &    free     \\
			  flat    &  (flat)   &  (flat)   &    flat    &    flat     &    flat     \\
			injective & injective & injective & injective  & (injective) & (injective) \\
			 cofree   &  cofree   & injective & injective  &      0      &      0\\
			\bottomrule
		\end{tabular}
	\end{center}
	Types in parenthesized can also be sent to zero; see \zcref{fig:lim-colim-diag-preserves}.
\end{remark}

The following justifies that we call a complex $C_\bullet$ \emph{eventually acyclic} if $H_q(C_\bullet)$ is finite dimensional for all $q$:

\begin{lemma}
	\label{thm:eventually-acyclic}
	If a chain complex $C_\bullet$ is eventually acyclic, then $\colim_Q C_\bullet = 0$ is acyclic for all $Q \neq \emptyset$.
\end{lemma}
\begin{proof}
	The functor $\Colim_Q$ is exact by \zcref{thm:colim-exact}, so $H_q(\colim_Q C_\bullet) \cong \colim_Q H_q(C_\bullet)$.
	If $H_q(C_\bullet)$ is finite dimensional, then for each $z$, the colimit $\bigl(\colim_Q H_q(C_\bullet)\bigr)_z$
	is a colimit over a diagram of vector spaces of which almost all are zero, hence the colimit is zero, too.
\end{proof}

\subsection{The Čech complex}
For $Q \subseteq \IntSet{n}$, let $e_Q \in \lZ^n$ be the vector with $(e_Q)_i = \begin{smallcases} -\infty & \text{if $i \in Q$,}\\ 0 & \text{otherwise,}\end{smallcases}$
and consider the module
\[
	\Omega_d = \smashoperator{\bigoplus_{Q \in \binom{\IntSet{n}}{n-d}}} F(e_Q).
\]
\begin{lemma}
	For every $M$, there is a natural isomorphism
	\[
		\Omega_d \otimes M \cong \smashoperator{\bigoplus_{Q \in \binom{\IntSet{n}}{n-d}}} \Colim_Q M.
	\]
\end{lemma}
\begin{proof}
	We show that there is a natural isomorphism $F(e_Q) \otimes M \cong \Colim_Q M$.
	The claim then follows from the fact that $\otimes$ commutes with finite direct sums.
	Without loss of generality, assume that $Q = \{k+1,\dotsc,n\}$ for some $0 \leq k < n$.
	A standard construction for the colimit of a diagram of vector space yields that for $w \in \Z^n$, we have
	\[
		(\Colim_Q M)_w = \bigl(\smashoperator{\bigoplus_{z_{k+1},\dotsc,z_k}} M_{(w_1,\dotsc,w_k,z_{k+1},\dotsc,z_k)}\bigr)\bigm/{\sim}
	\]
	where $x \sim M_{w,v}(x)$ for any $v \leq w$ and $x \in M_v$.
	For $w \in \Z^n$, let ${\downarrow}_Q(w) \coloneqq \Set{z \in \Z^n; \text{$z_i \leq w_i$ for all $i \in Q$}}$.
	One checks that
	\[
		(\Colim_Q M)_w = \bigl(\smashoperator{\bigoplus_{z \in {\downarrow}_Q(w)}} M_z\bigr)/{\sim}.
	\]
	Recall that
	\[
		(F(e_Q) \otimes M)_w = \Bigl(\bigoplus_{z} F(e_Q)_{w-z} \otimes_\F M_z\Bigr)/{\sim_\otimes},
	\]
	where $\sim_\otimes$ is the equivalence relation from \eqref{eq:def:tensor}.
	By definition, we have $F(e_Q)_{w-z} = \F$ if $w_i \geq z_i$ for all $i \in Q$, and zero otherwise.
	This implies that
	\[
		(F(e_Q) \otimes M)_w = \bigl(\smashoperator{\bigoplus_{z \in {\downarrow}_Q(w)}} \F \otimes_\F M_z\bigr)\bigm/{\sim_\otimes}.
	\]
	For $v \in \Z^N$, let $1^{(v)}$ denote the element $1 \in F(e_Q)_v$.
	Let $u \geq 0$ and $v, w \in \Z^n$.
	For $x \in M_w$, we have $1^{(u+v)} \otimes x \sim_\otimes 1^{(v)} \otimes M_{u+w,w}(x)$.
	Combining this with the isomorphism $M_w \mapsto \F \otimes_\F M_w$, $x \mapsto 1\otimes x$
	shows that $(F(e_Q) \otimes M)_w \cong (\Colim_Q M)_w$ naturally in $M$ and $w$.
	Using naturality in $w$, we obtain that $F(e_Q) \otimes M \cong \Colim_Q M$ as modules.
\end{proof}

For $Q \subseteq Q'$, let $\iota_{Q',Q}\colon F(e_Q) \into F(e_{Q'})$ be the canonical inclusion.

\begin{definition}
	\label{def:koszul-complex}
	The \emph{Čech complex} in $\Pers{n}$ is the chain complex $\Omega_\bullet$ of flat modules $\Omega_d$,
	with boundary morphisms $\kappa_d = \sum_{Q = \{q_1 < \dotsc < q_{n-d}\}}\sum_k (-1)^k\iota_{Q\setminus\{q_k\},Q}\colon \Omega_d \to \Omega_{d-1}$.
\end{definition}

For example, for $n = 2$, we obtain the chain complex depicted in \zcref{fig:koszul-complex}.
The Čech complex, which is commonly referred to by this name in algebraic geometry, is not to be confused with the Čech complex of a point cloud
considered in persistent homology.

\begin{figure}
	\centering
	\tikzset{
		every module diagram/.append style={baseline=(b)}
	}
	\newcommand{\Setup}{
		\Axes(-2.5,-2.5)(2.5,2.5)
		\Grid[-2](-1){2}[-2](-1){2}
		\coordinate (g) at (0,0);
		\coordinate (b) at (0,-1);
	}
	$
	0 \to
	\begin{tikzpicture}[module diagram]
		\Setup
		\FreeModule[blue](g){}
		\node[caption] {\Omega_2};
	\end{tikzpicture}
	\xinto{\Mtx{1\\1}}
	\begin{tikzpicture}[module diagram]
		\Setup
		\FreeModule<x>[blue](g){}
		\FreeModule<y>[blue](g){}
		\node[caption] {\Omega_1};
	\end{tikzpicture}
	\xto{(-1\ 1)}
	\begin{tikzpicture}[module diagram]
		\Setup
		\FreeModule<xy>[blue](g){}
		\node[caption] {\Omega_0};
	\end{tikzpicture}
	\onto
	\begin{tikzpicture}[module diagram]
		\Setup
		\InjModule[blue]($(g)-(1,1)$){}
		\node[caption] {I(-\one)};
	\end{tikzpicture}
	\to 0$
	\caption{The complex $\Omega_\bullet$ from \zcref{def:koszul-complex}.}
	\label{fig:koszul-complex}
\end{figure}

\begin{lemma}
	\label{thm:koszul}
	The complex $\Omega_\bullet$ is a flat resolution of $I(-\one)$,
	where the augmentation morphism
	\begin{equation}
		\label{eq:total-complex-augmentation-morphism}
		\varepsilon\colon \Omega_0 = F(e_{\IntSet{n}}) \onto I(-\one)
	\end{equation}
	has components $(\varepsilon)_z = \id_{\F}$ for all $z \leq -\one$.
\end{lemma}
\begin{proof}
	One checks that $H_i(\Omega_q) = 0$ for all $q > 0$, so $\Omega_\bullet$ is a flat resolution of some module $M \coloneqq H_0(\Omega_\bullet)$.
	By simple dimension counting, one sees $M_z = 0$ for all $z \nleq -\one$ and $M_z = \F$ for $z \leq -\one$.
	Again by dimension counting, we see $(\im\kappa_1)_z = 0$ for all $z \leq 0$, which implies that $M_{z'z} = \id_\F$ for all $z \leq z \leq -\one$.
\end{proof}

See \zcref{fig:koszul-complex} for an illustration.
Recall that the functor $\otimes$ commutes with finite direct sums.
Therefore, if $M$ is a finite rank flat module,
then $\Omega_\bullet M \coloneqq \Omega_\bullet \otimes M$ is a flat resolution of the injective module $\nu M$
with augmentation morphism $\varepsilon_M \coloneqq \varepsilon \otimes M\colon \Omega_0 M \to \nu M$.
There is a morphism $\varpi_M \coloneqq \varpi \otimes M\colon \Omega_n M \to M$.
Viewing the modules $M$ and $\nu M$ as chain complexes concentrated in degree zero,
we obtain a diagram of chain complexes
\begin{equation}
	\label{eq:F-Omega-nuF}
	\begin{tikzcd}[row sep=small]
		M[-n]\colon                                                               & M                                   &             &                         \\
		\Omega_\bullet  M \uar["\varpi_M"] \dar["\varepsilon_M"', "\simeq"]\colon & \Omega_n \otimes M \rar \uar[equal] & \dotsb \rar & \Omega_0 \otimes M \dar \\
		\nu M\colon                                                               &                                     &             & \nu M
	\end{tikzcd}
\end{equation}
where $\varepsilon_M$ is a quasi-isomorphism.

Let $(F_\bullet, \partial^F_\bullet)$ be a chain complex of free modules.
Then $\Omega_\bullet F_\bullet \coloneqq \Omega_\bullet \otimes F_\bullet$ is a double complex
with differentials
\begin{equation}
	\label{eq:kappa-partial}
	\kappa_{ij} \coloneqq \kappa_i \otimes \id_{F_j} \colon \Omega_i F_j \to \Omega_{i-1} F_j,\qquad
	\partial_{ij} \coloneqq \Omega_i \partial^{F}_j \colon \Omega_i F_j \to \Omega_i F_{j-1};
\end{equation}
i.e., the maps satisfy $\kappa_{i-1,j} \kappa_{i,j} = 0$, $\partial_{i,j-1} \partial_{i,j} = 0$ and $\kappa_{i,j-1} \partial_{i,j} = \partial_{i-1,j} \kappa_{i,j}$.
Let $\tilde{\Omega}_\bullet$ be the \emph{total complex} of $\Omega_\bullet F_\bullet$;
i.e, the chain complex with components and boundary morphisms
\begin{equation}
	\label{eq:total-cplx}
	\tilde{\Omega}_q = \smashoperator{\bigoplus_{i+j = q}} \Omega_i F_j,\qquad
	\partial^{\tilde{\Omega}}_q = \smashoperator{\sum_{i+j=q}} \bigl((-1)^q \kappa_{ij} + \partial_{ij}\bigr).
\end{equation}
\begin{example}
	\label{ex:total-cplx}
	Let $n = 2$ and $F_\bullet$ be a chain complex of free modules of the form $F_\bullet = (F_2 \xto{\partial_2} F_1 \xto{\partial_1} F_0)$.
	Then $\tilde{\Omega}_\bullet$ is the chain complex
	\begin{equation}
		\label{ex:total-cplx:block-matrices}
		\Omega_2 F_2
		\xto{\Mtx{\partial_{22} \\ \kappa_{22}}}
		\begin{matrix*}[r]
			\Omega_2 F_1 \\ \oplus \Omega_1 F_2
		\end{matrix*}
		\xto{\Mtx{\partial_{21} & 0 \\ -\kappa_{21} & \partial_{12} \\ 0 & -\kappa_{12}}}
		\begin{matrix*}[r]
			\Omega_2 F_0 \\ \oplus \Omega_1 F_1 \\ \oplus \Omega_0 F_2
		\end{matrix*}
		\xto{\Mtx{\kappa_{20} & \partial_{11} & 0 \\ 0 & \kappa_{11} & \partial_{02}}}
		\begin{matrix*}[r]
			\Omega_1 F_0 \\ \oplus \Omega_0 F_1
		\end{matrix*}
		\xto{(-\kappa_{10}\: 0)}
		\Omega_0 F_0.
	\end{equation}
	To make this example more explicit, let $F_\bullet$ be the chain complex $F_\bullet = \bigl(F\tbinom11 \xto{\Mtx{1\\1}} F\tbinom10 \oplus F\tbinom01 \xto{(-1\ 1)} F\tbinom00\bigr)$.
	Then $\Omega_\bullet F_\bullet$ is the double complex
	\[
		\begin{tikzcd}[column sep={13em,between origins}, row sep={4em,between origins}, plus/.style={gray, "\oplus" {description, sloped}, -, dotted}, eq/.style={gray, equal, dotted},cells={font=\small}]
			                                                       & [-6.5em] F\tbinom11 \rar{\partial_{22}} \dar{\kappa_{22}}                                            & F\tbinom10 \oplus F\tbinom01 \rar{\partial_{21}} \dar{\kappa_{21}}                                                                                           & F\tbinom00 \dar{\kappa_{20}}                                     \\
			|[shift={(0,2em)}, gray]| \tilde{\Omega}_4 \ar[ur,eq]  & F\tbinom{1}{-\infty} \oplus F\tbinom{-\infty}{1} \rar{\partial_{12}} \dar{\kappa_{12}} \ar[ur, plus] & F\tbinom{1}{-\infty} \oplus F\tbinom{0}{-\infty} \oplus F\tbinom{-\infty}{0} \oplus F\tbinom{-\infty}{1} \rar{\partial_{11}} \dar{\kappa_{11}} \ar[ur, plus] & F\tbinom{0}{-\infty}\oplus F\tbinom{-\infty}{0}\dar{\kappa_{10}} \\
			|[shift={(0,2em)}, gray]| \tilde{\Omega}_3 \ar[ur,eq]  & F\tbinom{-\infty}{-\infty} \rar{\partial_{02}} \ar[ur, plus]                                         & F\tbinom{-\infty}{-\infty} \oplus F\tbinom{-\infty}{-\infty} \rar{\partial_{01}} \ar[ur, plus]                                                               & F\tbinom{-\infty}{-\infty}                                       \\[-2em]
			|[shift={(0,   0)}, gray]| \tilde{\Omega}_2 \ar[ur,eq] & |[shift={(6.5em,0)}, gray]|\tilde{\Omega}_1 \ar[ur, eq]                                              & |[shift={(6.5em,0)}, gray]| \tilde{\Omega}_0 \ar[ur, eq]                                                                                                     &
		\end{tikzcd}
	\]
	The parts $\tilde{\Omega}_k$ of its total complex $\tilde{\Omega}_\bullet$ are the direct sums of the modules
	on the same (dotted) diagonal.
	The underlying ungraded matrices of the morphisms $\kappa_{ij}$ and $\partial_{ij}$ are
	\begin{align*}
		\u{\partial_{22}} = \u{\partial_{02}} &= \Mtx{1 \\ 1}, &
		\u{\kappa_{22}} = \u{\kappa_{20}} & = \Mtx{1 \\ 1},\\
		\u{\partial_{22}} = \u{\partial_{02}} &= (-1\ 1), &
		\u{\kappa_{12}} = \u{\kappa_{10}} &= (-1\ 1),\\
		\u{\partial_{12}} &= \begin{psmallmatrix}
			1 & 0 \\
			1 & 0 \\
			0 & 1 \\
			0 & 1
		\end{psmallmatrix},&
		\u{\kappa_{21}} &= \begin{psmallmatrix}
			1 & 0 \\
			0 & 1 \\
			1 & 0 \\
			0 & 1
		\end{psmallmatrix},\\
		\u{\partial_{11}} &= \begin{psmallmatrix*}[r]
			-1 &  1 &  0 & 0 \\
			 0 &  0 & -1 & 1
		\end{psmallmatrix*},&
		\u{\kappa_{11}} &= \begin{psmallmatrix*}[r]
			-1 &  0 & 1 & 0 \\
			 0 & -1 & 0 & 1
		\end{psmallmatrix*}.
	\end{align*}
	The boundary morphisms of $\tilde{\Omega}_\bullet$ are block matrices assembled from these
	as in \eqref{ex:total-cplx:block-matrices}.
\end{example}

The augmentation morphisms $\varepsilon_{F_i}\colon \Omega_\bullet F_i \stackrel\simeq\to \nu F_i$ from \eqref{eq:F-Omega-nuF}
induce a quasi-isomorphism $\tilde{\varepsilon}_\bullet\colon \tilde{\Omega}_\bullet \to \nu F_\bullet$ of chain complexes.
Similarly, the morphisms $\varpi_{F_i}\colon \Omega_\bullet F_i \to F_i[-n]$ from \eqref{eq:F-Omega-nuF} induce a morphism $\tilde{\varpi}_\bullet\colon \tilde{\Omega}_\bullet \to F_\bullet[-n]$ of chain complexes.
We obtain the diagram
\begin{equation}
	\label{eq:epsilon-pi}
	\begin{tikzcd}
		              & \tilde{\Omega}_\bullet \ar[dr, "\simeq", "\tilde{\varepsilon}_\bullet"'] \ar[dl, "\tilde{\varpi}_\bullet"] &  \\
		F_\bullet[-n] &                                                                                            & \nu F_\bullet,
	\end{tikzcd}
\end{equation}
where $\tilde{\varepsilon}$ is a quasi-isomorphism.

\begin{example}[Continuation of \zcref{ex:total-cplx}]
	\label{ex:total-cplx-contd}
	Recall the total complex $\tilde{\Omega}_\bullet$ from \eqref{ex:total-cplx:block-matrices}.
	In this situation, \eqref{eq:epsilon-pi} takes the form
	\[
				\def\X{\vphantom{\ensuremath{\Omega_0 F_0}}}
		\begin{tikzcd}[ampersand replacement=\&, column sep=4.25em]
			F_\bullet[-2]\colon \&[-5em] F_2 \rar{\partial_2} \&[-2.5em] F_1 \rar{\partial_1} \&[1em] F_0 \rar \&[1em] 0 \rar \&[-1.5em] 0 \\
			\tilde{\Omega}_\bullet\colon \uar{\tilde{\varpi}_\bullet} \dar[swap]{\tilde{\varepsilon}_\bullet} \&
			\begin{matrix*}[r]
				\Omega_2 F_2
			\end{matrix*}
			\rar{\Mtx{\partial_{22} \\ \kappa_{22}}}
			\uar{\id_{F_2}}
			\dar \&
			\begin{matrix*}[r]
				\Omega_2 F_1 \\ {}\oplus \Omega_1 F_2
			\end{matrix*}
			\rar{\Mtx{\partial_{21} & 0 \\ -\kappa_{21} & \partial_{12} \\ 0 & -\kappa_{12}}}
			\uar{(\id_{F_1} 0)}
			\dar \&
			\begin{matrix*}[r]
				\Omega_2 F_0 \\ {}\oplus \Omega_1 F_1 \\ {}\oplus \Omega_0 F_2
			\end{matrix*}
			\rar{\Mtx{\kappa_{20} & \partial_{11} & 0 \\ 0 & \kappa_{11} & \partial_{02}}}
			\uar[swap]{(\id_{F_0}\ 0\ 0)}
			\dar{(0\ 0\ \varepsilon_{F_2})} \&
			\begin{matrix*}[r]
				\Omega_1 F_0 \\ {}\oplus \Omega_0 F_1
			\end{matrix*}
			\rar{(-\kappa_{10}\: 0)}
			\uar
			\dar{(0\ \varepsilon_{F_1})} \&
			\begin{matrix*}[r]
				\Omega_0 F_0
			\end{matrix*}
			\uar
			\dar{\varepsilon_{F_0}} \\
			\nu F_\bullet\colon \& 0 \rar \& 0 \rar \& \nu F_2 \rar[swap]{\nu \partial_2} \& \nu F_1 \rar[swap]{\nu \partial_1} \& \nu F_0.
		\end{tikzcd}
	\]
\end{example}

We are now ready to state our central result:
\begin{theorem}
	\label{thm:main-preview}
	If the chain complex $F_\bullet$ is eventually acyclic, then $\tilde{\varpi}_\bullet$ is a homotopy equivalence.
\end{theorem}

We will establish \zcref{thm:main-preview} by providing a homotopy inverse $\tilde{\varpi}'_\bullet$ of $\tilde{\varpi}_\bullet$; see \zcref{thm:main}.
Before, we give a simple example of the theorem:

\begin{example}
	Let $n = 1$, and consider the free resolution $F_\bullet\colon F(1) \xto{1} F(0)$ of the simple module $\SimpleModule$,
	where $\SimpleModule_z = \F$ is $z = 0$ and $\SimpleModule_z = 0$ otherwise.
	Then $\tilde{\varpi}_\bullet$ fits into the diagram
	\[
		\begin{tikzcd}
			\tilde{\Omega}_\bullet\colon \dar[shift left=3pt, "\tilde{\varpi}_\bullet"] & F(1) \rar["\Mtx{1\\1}"'] \dar[shift left=3pt, "1"] & F(0)\oplus F(-\infty) \dar[shift left=3pt, "(1\ 0)"] \rar["(-1\ 1)"'] \lar[dashed, bend right, "0"'] & F(-\infty) \dar[shift left=3pt] \lar[dashed, bend right, "\Mtx{0\\1}"'] \\
			F_\bullet[-1]\colon \uar[shift left=3pt, "\tilde{\varpi}'_\bullet"]         & F(1) \rar["1"'] \uar[shift left=3pt, "1"]          & F(0) \uar["\Mtx{1\\1}", shift left=3pt] \rar                                                         & 0 \uar[shift left=3pt]
		\end{tikzcd}
	\]
	with the right term in degree zero.
	The morphism $\tilde{\varpi}'_\bullet$ is a homotopy inverse of $\tilde{\varpi}_\bullet$, exhibited by the dashed arrows.
\end{example}

\section{Constructing the quasi-inverse \texorpdfstring{$\tilde{\varpi}_\bullet'$}{ϖ̃'}}
The goal for this section is to show that the morphism $\tilde{\varpi}_\bullet$ from \eqref{eq:epsilon-pi} is a homotopy equivalence
by providing a homotopy inverse $\tilde{\varpi}'_\bullet$.
As before, let $F_\bullet$ be an eventually acyclic chain complex of finite rank free modules in $\Pers{n}$.
By \zcref{thm:eventually-acyclic}, the chain complex $\colim_Q F_\bullet$ is acyclic for every $Q$.
It is a complex of free $(n-\abs{Q})$-parameter modules, so by \zcref{rmk:obtaining-contraction}, it is contractible.
Let $\bar{s}^Q_\bullet$ be a chain contraction of $\colim_Q F_\bullet$.
Then also the chain complex $\Colim_Q F_\bullet$ of flat $n$-parameter modules is contractible, with
the chain contraction $s^Q_\bullet \coloneqq \Delta_Q \bar{s}^Q_\bullet$.
Hence, for every $0 \leq i < n$, the chain complex $\Omega_i F_\bullet = \bigoplus_{Q \in \binom{\IntSet{n}}{n - i}} \Colim_Q F_\bullet$ is contractible 
with the chain contraction $s_{i\bullet} \coloneqq \bigoplus_{Q \in \binom{\IntSet{n}}{n - i}} s^Q_\bullet$.
Explicitly, this means that for all $j \in \Z$ and $0 \leq i < n$,
there are morphisms $s_{ij}\colon \Omega_i F_j \to \Omega_i F_{j+1}$ such that
\begin{equation}
	\label{eq:Koszul stage contraction}
	\partial_{i,j+1} s_{ij} + s_{i,j-1} \partial_{ij} = \id_{\Omega_i F_j}.
\end{equation}
We will use these contractions to build the desired morphism $\tilde{\varpi}'_\bullet$.
Recall the boundary morphism $\kappa_{ij}\colon \Omega_i F_j \to \Omega_{i-1} F_j$ from \eqref{eq:kappa-partial}.
For $k \geq 0$, we recursively define morphisms $t_{ijk}\colon \Omega_i F_j \to \Omega_{i-k} F_{j+k}$ by
\begin{equation}
	\label{eq:flat-injective presentations:t}
	\begin{split}
		t_{ij0} &= \id_{\Omega_i F_j},\\
		t_{i,j,k+1} &= s_{i-k-1, j+k} \, \kappa_{i-k, j+k} \, t_{i,j,k}
	\end{split}
\end{equation}
We obtain the diagram
\begin{equation}
	\label{eq:big-cd}
	\begin{tikzcd}[s/.style={densely dashed, bend right, swap}, t/.style={dotted, near end}, column sep={6.25em,between origins}]
		       &[-3em] \vdots \dar                                                                      & \vdots\dar                                                                               & \iddots               & \vdots \dar                                                                                        & \vdots \dar                                                                                                & \vdots       \dar                                                \\
		0 \rar & \Omega_n F_2 \rar["\kappa_{n2}"] \dar["\partial_{n2}"' near end] \ar[ur, t]            & \Omega_{n-1} F_2 \rar \dar["\partial_{n-1,2}"' near end] \uar[s]              \ar[ur, t] & \cdots \rar\ar[ur, t] & \Omega_2 F_2 \rar["\kappa_{22}"] \dar["\partial_{22}"' near end] \uar[s]           \ar[ur, t]            & \Omega_1 F_2 \rar["\kappa_{12}"] \dar["\partial_{12}"' near end]  \uar[s ]           \ar[ur, t]            & \Omega_0 F_2 \dar["\partial_{02}"' near end]   \uar[s]           \\
		0 \rar & \Omega_n F_1 \rar["\kappa_{n1}"] \dar["\partial_{n1}"' near end] \ar[ur, t, "t_{n11}"] & \Omega_{n-1} F_1 \rar \dar["\partial_{n-1,1}"' near end] \uar[s, "s_{n-1,1}"] \ar[ur, t] & \cdots \rar\ar[ur, t] & \Omega_2 F_1 \rar["\kappa_{21}"] \dar["\partial_{21}"' near end] \uar[s, "s_{21}"] \ar[ur, t, "t_{211}"] & \Omega_1 F_1 \rar["\kappa_{11}"] \dar["\partial_{11}"' near end]  \uar[s, "s_{11}" ] \ar[ur, t, "t_{111}"] & \Omega_0 F_1 \dar["\partial_{01}"' near end]   \uar[s, "s_{01}"] \\
		0 \rar & \Omega_n F_0 \rar["\kappa_{n0}"]                                 \ar[ur, t, "t_{n01}"] & \Omega_{n-1} F_0 \rar                                    \uar[s, "s_{n-1,0}"] \ar[ur, t] & \cdots \rar\ar[ur, t] & \Omega_2 F_0 \rar["\kappa_{20}"]                                 \uar[s, "s_{20}"] \ar[ur, t, "t_{201}"] & \Omega_1 F_0 \rar["\kappa_{10}"]                                  \uar[s, "s_{10}" ] \ar[ur, t, "t_{101}"] & \Omega_0 F_0                                   \uar[s, "s_{00}"]
	\end{tikzcd}
\end{equation}
of which the solid arrows commute.
The dashed arrows form the contractions $s_{i\bullet}$ of the complexes $\Omega_i F_\bullet$.
The dotted arrows are the morphisms $t_{ij1}$.
The morphisms $t_{ijk}$ for $k > 1$ are the compositions of consecutive diagonal arrows $t_{ij1}$.
In the leftmost column $\Omega_\bullet F_\bullet = F_\bullet$, there is no contraction $s_{n\bullet}$.

We are now ready to establish our main theorem, which gives an explicit construction for the quasi-inverse $\tilde{\varpi}'_\bullet$:
\begin{theorem}
	\label{thm:main}
	Let $F_\bullet$ be an eventually acyclic chain complex of free modules, and let $\tilde{\Omega}_\bullet$, $\tilde{\varpi}_\bullet$, and $t_{ijk}$ be as above.
	\begin{enumerate}
		\item\label{thm:main:1} The morphisms
		\settowidth\ScratchLen{$(-1)^{n(q+1)}  t_{nqn}$}
		\begin{equation*}
			\tilde{\varpi}'_q \coloneqq
			\begin{psmallmatrix}
				\id_F \\
				(-1)^{q+1} t_{nq1} \\
				\phantom{(-1)^{q+1}} t_{nq2} \\
				(-1)^{q+1} t_{nq3} \\
				\phantom{(-1)^{q+1}} t_{nq4} \\
				\phantom{(-1)^{q+1}} \hspace{0.5ex} \vdots \\
				(-1)^{n(q+1)}  t_{nqn}
			\end{psmallmatrix}
			\colon F_q \to \underbrace{\Omega_n F_q \oplus \dotsb \oplus \Omega_0 F_{q+n}}_{\tilde{\Omega}_{q+n}},
		\end{equation*}
		of modules form a morphism $\tilde{\varpi}'_\bullet \colon F_\bullet \to \tilde{\Omega}_\bullet\HShift{n}$ of chain complexes.
		\item \label{thm:main:2} It is a homotopy inverse of $\tilde{\varpi}_\bullet\colon \tilde{\Omega}_\bullet\HShift{n} \to F_\bullet$.
		\item \label{thm:main:3} The composition $\phi_\bullet\colon F_\bullet \xto{\varpi_\bullet'} \tilde{\Omega}_\bullet[n] \xto{\tilde{\varepsilon}_\bullet} \nu F_\bullet[n]$ is a quasi-isomorphism.
	\end{enumerate}
\end{theorem}

Before we give the proof, we show a small example.
\begin{example}[Continuation of \zcref{ex:total-cplx-contd}]
	Recall the morphism $\tilde{\varpi}_\bullet\colon \tilde{\Omega}_\bullet \to F_\bullet[-2]$ of chain complexes from \zcref{ex:total-cplx-contd}.
	In this situation, $\tilde{\varpi}_\bullet$ and $\tilde{\varpi}'_\bullet$ are the morphisms
	\[
		\def\X{\vphantom{\ensuremath{\Omega_0 F_0}}}
		\begin{tikzcd}[ampersand replacement=\&, row sep=large, column sep=2.25em]
			F_\bullet\colon \dar["\tilde{\varpi}'_\bullet", shift left]\&[-1.5em]
			F_2 \dar[shift left]{\id_{F_2}} \rar{\partial_2} \&[-.5em]
			F_1 \dar[shift left]{\Mtx{\id_{F_1} \\ s_{11} \kappa_{21}}} \rar{\partial_1} \&[2.5em]
			F_0 \dar[shift left]{\Mtx{\id_{F_0} \\ -s_{10} \kappa_{20} \\ s_{01} \kappa_{11} s_{10} \kappa_{20}}} \rar \&[2.5em]
			0 \dar[shift left] \rar \&[.5em] 
			0 \dar[shift left]
			\\
			\tilde{\Omega}_\bullet[2]\colon \uar[shift left, "\tilde{\varpi}_\bullet"]\&
			\Omega_2 F_2
			\uar[shift left, "\id_{F_2}"]
			\rar[swap]{\Mtx{\partial_{22} \\ \kappa_{22}}} \&
				\begin{matrix*}[r]
					\Omega_2 F_1 \\ {} \oplus \Omega_1 F_2
				\end{matrix*}
			\uar[shift left, "{(\id_{F_1}\ 0)}"]
			\rar[swap]{\Mtx{\partial_{21} & 0 \\ -\kappa_{21} & \partial_{12} \\ 0 & -\kappa_{12}}} \&
				\begin{matrix*}[r]
					\Omega_2 F_0 \\  {} \oplus\Omega_1 F_1 \\ {} \oplus \Omega_0 F_2
				\end{matrix*}
			\uar[shift left, "({\id_{F_0}\ 0\ 0})"]
			\rar[swap]{\Mtx{\kappa_{20} & \partial_{11} & 0 \\ 0 & \kappa_{11} & \partial_{02}}} \&
				\begin{matrix*}[r]
					\Omega_1 F_0 \\ \oplus \Omega_0 F_1
				\end{matrix*}
			\uar[shift left] \rar[swap]{(-\kappa_{10, 0})} \&
			\Omega_0 F_0,
			\uar[shift left]
		\end{tikzcd}
	\]
	with the middle term in homological degree zero.
	To see that $\tilde{\varpi}'_\bullet$ is a morphism of chain complexes, we have to show that the squares with downward morphisms commute.
	Since $s_{i\bullet}$ is a contraction for each $i$, we have
	\begin{align}
		\label{4QWOch3Ociss}
		\id_{\Omega_k F_2} &= s_{k1} \partial_{k2} + \underbrace{\partial_{k3} s_{k2}}_{0} \\
		\label{wWaDxdhCCbd3}
		\id_{\Omega_k F_1} &= s_{k0} \partial_{k1} + \partial_{k2} s_{k1}
	\end{align}
	for all $k$.
	For the first square, we get that
	\[
		\begin{pmatrix}\id_{\Omega_2 F_1} \\ s_{11} \kappa_{21}\end{pmatrix} \partial_2
		= \begin{pmatrix}\partial_{22} \\ s_{11} \kappa_{21} \partial_{22} \end{pmatrix}
		= \begin{pmatrix}\partial_{22} \\ s_{11} \partial_{12} \kappa_{22} \end{pmatrix}
		\stackrel{\eqref{4QWOch3Ociss}}{=} \begin{pmatrix} \partial_{22} \\ \kappa_{22} \end{pmatrix}
		= \begin{pmatrix}\partial_{22} \\ \kappa_{22} \end{pmatrix}  \id_{\Omega_2 F_2},
	\]
	so the first square commutes.
	For the second square, we get
	\begin{multline*}
		  \begin{pmatrix} \id_{\Omega_2 F_0} \\ -s_{10} \kappa_{20} \\ s_{01} \kappa_{11} s_{10} \kappa_{20}\end{pmatrix} \partial_1
		= \begin{pmatrix} \partial_{21} \\ -s_{10} \kappa_{20} \partial_{21} \\ s_{01} \kappa_{11} s_{10} \kappa_{20} \partial_{21}\end{pmatrix}\\
		= \begin{pmatrix} \partial_{21} \\ -\kappa_{21} + \partial_{12} s_{11} \kappa_{21}  \\ - \kappa_{12} s_{11} \kappa_{21}\end{pmatrix}
		= \begin{pmatrix} \partial_{21} & 0 \\ -\kappa_{21} & \partial_{12} \\ 0 & -\kappa_{12}\end{pmatrix}
		  \begin{pmatrix} \id_{\Omega_2 F_1} \\ s_{11} \kappa_{21}\end{pmatrix}
	\end{multline*}
	because
	\begin{gather*}
		-s_{10} \kappa_{20} \partial_{21}
			= -s_{10} \partial_{11} \kappa_{21}
			= -\kappa_{21} + (\id_{\Omega_1 F_1} - s_{10} \partial_{11}) \kappa_{21}
			\stackrel{\eqref{wWaDxdhCCbd3}}{=} -\kappa_{21} + \partial_{12} s_{11} \kappa_{21},\\\intertext{and}
		\begin{multlined}[\linewidth-2cm]
			s_{01} \kappa_{11} s_{10} \kappa_{20} \partial_{21}
			= s_{01} \kappa_{11} s_{10} \partial_{11} \kappa_{21}
			\stackrel{\eqref{wWaDxdhCCbd3}}{=} -s_{01} \kappa_{11} \partial_{12} s_{11} \kappa_{21}\\
			= -s_{01} \partial_{02} \kappa_{12} s_{11} \kappa_{21}
			\stackrel{\eqref{4QWOch3Ociss}}{=} \kappa_{12} s_{11} \kappa_{21}.
		\end{multlined}
	\end{gather*}
	For the third square, we check
	\[
		  \begin{pmatrix}\kappa_{20} \partial_{11} & 0 & 0 \\ 0 & \kappa_{11} & \partial_{02}\end{pmatrix}
		  \begin{pmatrix}\id \\ -s_{10} \kappa_{20} \\ s_{01} \kappa_{11} s_{10} \kappa_{20}\end{pmatrix}
		= \begin{pmatrix}\kappa_{20} - \partial_{11} s_{10} \kappa_{20} \\ \kappa_{11} s_{10} \kappa_{20} + \partial_{02} s_{01} \kappa_{11} s_{10} \kappa_{20}\end{pmatrix}
		= 0
	\]
	because
	\begin{gather*}
		\kappa_{20} - \partial_{11} s_{10} \kappa_{20} \stackrel{\eqref{4QWOch3Ociss}}{=} \kappa_{20} - \kappa_{20} = 0,\\
		\kappa_{11} s_{10} \kappa_{20} + \underbrace{\partial_{02} s_{01}}_{\id - s_{00} \partial_{01}} \kappa_{11} s_{10} \kappa_{20}
			= - s_{00} \partial_{01} \kappa_{11} s_{10} \kappa_{20}
			= - s_{00} \kappa_{10} \underbrace{\partial_{11} s_{10}}_{\id} \kappa_{20}
			= 0.
	\end{gather*}
	One can show in a similar way that $\tilde{\varpi}_\bullet$ and $\tilde{\varpi}'_\bullet$ are chain homotopy inverses.
\end{example}
\begin{proof}[Proof of \zcref{thm:main}]
	For simplicity of notation,
	when writing down a composition of morphisms $\kappa_{ij}$, $\partial_{ij}$, $s_{ij}$ and $t_{ijk}$, we omit the indices $i$ and $j$ from all but the rightmost morphism.
	For the others, these indices then are determined because each of these has domain $\Omega_i F_j$.
	For example, in the following claim, $\kappa \partial t_{ijk}$ stands for $\kappa_{i-k,j+k-1} \, \partial_{i-k,j+k} \, t_{ijk}$,
	and $\kappa t_k \partial_{ij}$ stands for $\kappa_{i-k,j+k-1}\, t_{i,j-1,k} \, \partial_{ij}$.
	\begin{claim*}
		For all $i < n$, $j$ and $k$, we have
		\begin{equation}
			\label{eq:pulling partial over t}
			\kappa \partial t_{ijk} = (-1)^k \kappa t_k \partial_{ij}.
		\end{equation}
	\end{claim*}
	\begin{claimproof}
		We show this by induction over $k$.
		Applying the induction hypothesis at $(*)$
		and using that $\partial_{\bullet\bullet}$ and $\kappa_{\bullet\bullet}$ form a double complex in $(\dagger)$,
		 we get
		\begin{multline*}
			\kappa \partial t_{ijk}
			\stackrel{\eqref{eq:flat-injective presentations:t}}{=}
			\kappa \partial s \kappa t_{i,j,k-1}
			\stackrel{\eqref{eq:Koszul stage contraction}}{=}
			\kappa (1 -  s \partial ) \kappa t_{i,j,k-1}
			\stackrel{(\dagger)}{=}
			- \kappa s \partial \kappa t_{i,j,k-1} \\
			\stackrel{(\dagger)}{=}
			- \kappa s \kappa \partial t_{i,j,k-1}
			\stackrel{(*)}{=}
			(-1)^k \kappa s \kappa t_{k-1} \partial_{ij}
			\stackrel{\eqref{eq:flat-injective presentations:t}}{=}
			(-1)^k \kappa t_k \partial_{ij}.\qedhere
		\end{multline*}
	\end{claimproof}
	The claim implies that 	for all $i < n$, $j$ and $k$, we have
	\begin{multline}
		\label{eq:t contraction relation}
		\partial t_{ijk} - \kappa t_{i,j,k-1}
		\stackrel{\eqref{eq:flat-injective presentations:t}}{=} (\partial s - 1) \kappa t_{i,j,k-1}
		\stackrel{\eqref{eq:Koszul stage contraction}}{=} -s \partial \kappa t_{i,j,k-1}\\
		\stackrel{\eqref{eq:pulling partial over t}}{=} -(-1)^{k-1} s \kappa t_{k-1} \partial_{ij}
		\stackrel{\eqref{eq:flat-injective presentations:t}}{=} (-1)^{k} t_k \partial_{ij}.
	\end{multline}

	To show \proofref{thm:main:1}, recall the boundary morphism $\partial^{\tilde{\Omega}}_q$ of $\tilde{\Omega}_\bullet$ from \eqref{eq:total-cplx}.
	Let $\pm$ stand for $(-1)^q$ and $\mp$ for $(-1)^{q+1}$.
	We obtain that
	\begin{multline*}
		\partial^{\tilde{\Omega}}_q \tilde{\varpi}'_q
		=
		\scriptstyle
		\setlength{\arraycolsep}{2pt}
		\begin{pNiceMatrix}[]
			\partial_{nq}   & 0                    & \Cdots                 &                      & 0                     \\
			\pm \kappa_{nq} & \partial_{n-1,q+1}   & \Ddots                 &                      & \Vdots                \\
			0               & \pm \kappa_{n-1,q+1} & \;\partial_{n-2,q+2}\; &                      &                       \\
			\Vdots          & \Ddots               & \Ddots                 & \Ddots[draw-first]   & 0                     \\
			0               & \Cdots               & 0                      & \pm \kappa_{1,q+n-1} & \;\partial_{0, q+n}\;
		\end{pNiceMatrix}
		\begin{pmatrix}
			\id_{\Omega_n F_q}       \\
			\mp          t_{nq1}     \\
			             t_{nq2}     \\
			\vdots                   \\
			(\mp)^{n-1}  t_{n,q,n-1} \\
			(\mp)^{n}    t_{nqn}
		\end{pmatrix}
		\\
		=
		\begin{pmatrix}
			            \partial_{nq}                          \\
			\mp        (\partial t_{nq1} - \kappa_{nq})        \\
			-          (\partial t_{nq2} - \kappa t_{nq1})     \\
			\vdots                                             \\
			(\mp)^{n}  (\partial t_{nqn} - \kappa t_{n,q,n-1})
		\end{pmatrix}
		\stackrel{\eqref{eq:t contraction relation}}{=}
		\begin{pmatrix}
			              \partial_{nq} \\
			\pm       t_1 \partial_{nq} \\
			          t_2 \partial_{nq} \\
			\vdots                   \\
			(\pm)^{n} t_n \partial_{nq}
		\end{pmatrix}
		= \tilde{\varpi}'_{q-1} \partial^{F}_q.
	\end{multline*}
	Therefore, $\tilde{\varpi}'_\bullet$ is a morphism of chain complexes.
	
	For \proofref{thm:main:2}, we have to show that $\tilde{\varpi}'_\bullet$ is a homotopy inverse of $\tilde{\varpi}_\bullet$.
	We clearly have $\tilde{\varpi}_\bullet \tilde{\varpi}'_\bullet = \id_{F_\bullet}$.
	For the other composition $\tilde{\varpi}'_\bullet \tilde{\varpi}_\bullet$, we will show that the morphism
	\begin{equation*}
		\sigma_j \coloneqq
		\begin{pNiceArray}{c r@{}l r@{}l r@{}l c c}
			0      & \multicolumn{2}{c}{0}      & \Cdots  &                  &         &                  &             & 0                                \\
			0      &         & s_{n-1,q+1}      &         &                  &         &                  &             & \Vdots                           \\
			0      & \mp t_1 & s_{n-1,q+1}      &         & s_{n-2,q+2}      &         &                  &             &                                  \\
			0      & t_2     & s_{n-1,q+1}      & \mp t_1 & s_{n-2,q+2}      &         & s_{n-3,q+3}      &             &                                  \\
			0      & \mp t_3 & s_{n-1,q+1}      & t_2     & s_{n-2,q+2}      & \mp t_1 & s_{n-3,q+3}      & s_{n-4,q+4} & 0                                \\
			\vdots & \multicolumn{2}{c}{\vdots} & \multicolumn{2}{c}{\vdots} & \multicolumn{2}{c}{\vdots} & \vdots      & \smash{\rotatebox{20}{$\ddots$}}
			\CodeAfter
			\line{1-2}{5-9}
		\end{pNiceArray}
		\colon
		\tilde{\Omega}_{q+n}
		\to
		\tilde{\Omega}_{q+n+1}.
	\end{equation*}
	form a homotopy from $\id_{\tilde{\Omega}_\bullet}$ to $\tilde{\varpi}'_\bullet \tilde{\varpi}_\bullet$.
	Note that
	\begin{equation}
		\label{eq:t two-sided recursion}
		t_{i,j,k+1}
		\stackrel{\eqref{eq:flat-injective presentations:t}}{=}
		s \kappa t_{ijk}
		= t_{k} s \kappa_{ij},
	\end{equation}
	which both follow directly from the definition of $t_{ijk}$.
	From this, it follows that for all $k \geq 0$, we have
	\settowidth\ScratchLen{$\scriptstyle \eqref{eq:t two-sided recursion}$}
	\newcommand\eq[1][]{\stackrel{\smash{\mathmakebox[\ScratchLen][c]{#1}}}{=}}
	\begin{align}
		\notag
		&\phantom{{}\eq{}}
		(\kappa t_k s_{ij} - \partial t_{k} s_{ij}) + (-1)^{k+1} (t_{k+1} s \partial_{ij} - t_k s \kappa_{ij}) \\
		\notag
		&\eq[\eqref{eq:t two-sided recursion}]
		(1 - \partial s) \kappa t_k s_{ij} + (-1)^{k} t_k s \kappa (s \partial_{ij} - 1) \\
		\notag
		&\eq[\eqref{eq:Koszul stage contraction}]
		s \partial \kappa t_k s_{ij} - (-1)^{k} t_k s \kappa \partial s_{ij} \\
		\notag
		&\eq[\eqref{eq:t two-sided recursion}]
		s \partial \kappa t_k s_{ij} - (-1)^{k} s \kappa t_k \partial s_{ij} \\
		\notag
		&\eq[\eqref{eq:pulling partial over t}]
		s \partial \kappa t_k s_{ij} - s \partial \kappa t_k s_{ij} \\
		\label{eq:contraction zero fields}
		&\eq
		0.
	\end{align}
	We are now ready to show that $\sigma_\bullet$ is the desired chain homotopy.
	In the following calculation, for the sake of legibility, we leave out the indices $i$ and $j$ of the morphisms $s_{ij}$, $\kappa_{ij}$, $\partial_{ij}$ and $t_{ijk}$
	We obtain
	\settowidth\ScratchLen{$\scriptstyle\eqref{eq:contraction zero fields}$}
	\begin{align*}
		&\phantom{{}\eq{}} \partial^{\tilde{\Omega}}_{j+1} \sigma_j + \sigma_{j-1} \partial^{\tilde{\Omega}}_j\\
		&\eq
		\begin{psmallmatrix}
			\partial \\
			\mp \kappa & \partial \\
			& \mp \kappa & \partial\\
			& & \mp \kappa & \partial\\
			& & & & \ddots
		\end{psmallmatrix}
		\begin{psmallmatrix}
			\mathstrut 0 \\
			0 & s \\
			0 & \pm t_1 s & s \\
			0 & t_2 s & \pm t_1 s & s \\
			\vdots & & & & \ddots
		\end{psmallmatrix}
		+
		\begin{psmallmatrix}
			\mathstrut 0 \\
			0 & s \\
			0 & \mp t_1 s & s \\
			0 & t_2 s & \mp t_1 s & s \\
			\vdots & & & & \ddots
		\end{psmallmatrix}
		\begin{psmallmatrix}
			\partial \\
			\pm \kappa & \partial \\
			& \pm \kappa & \partial\\
			& & \pm \kappa & \partial\\
			& & & & \ddots
		\end{psmallmatrix}
		\\
		&\eq
		\begin{pmatrix}
			0
			\\
			\phantom{t_0} {\pm s \kappa} &
			s \partial + \partial s
			\\
			- t_1 s \kappa &
			\mp \kappa s \pm \partial t_1 s \mp t_1 s \partial \pm s \kappa &
			s \partial + \partial s
			\\
			\pm t_2 s \kappa &
			-\kappa t_1 s + \partial t_2 s - t_1 s \kappa &
			\mp \kappa s \pm \partial t_1 s \mp t_1 s \partial \pm s \kappa &
			s \partial + \partial s \\
			\vdots & & & & \smash{\rotatebox{30}{$\ddots$}}
		\end{pmatrix}
		\\
		&\eq[\eqref{eq:contraction zero fields}] \mleft(
		\begin{array}{cc@{\hspace{-0.75ex}}c@{\hspace{-0.75ex}}c@{\hspace{-0.75ex}}c}
			0 \\
			\pm t_1 & \id_{\Omega_{n-1} F_{q+1}} \\
			-t_2 & 0 & \id_{\Omega_{n-2} F_{q+2}} \\
			\pm t_3 & 0 & 0 & \id_{\Omega_{n-3} F_{q+3}} \\
			\vdots & & & & \smash{\rotatebox{20}{$\ddots$}}
		\end{array}
		\mright) \\
		&\eq \id_{\tilde{\Omega}_{n+q}} - \tilde{\varpi}'_j \tilde{\varpi}_j.
	\end{align*}
	This shows that $\tilde{\varpi}'_\bullet \tilde{\varpi}_\bullet$ and $\id_{\tilde{\Omega}_\bullet}$ are chain homotopic.
	
	Lastly, \proofref{thm:main:3} follows from \proofref{thm:main:2} and the fact that $\tilde{\varepsilon}_\bullet$ is the quasi-isomorphism from \eqref{eq:epsilon-pi}.
\end{proof}

\section{Computing flat-injective presentations}
Assume that $F_\bullet$ is a free resolution of some finite dimensional module $M$.
We will use \zcref{thm:main} to construct a flat-injective presentation matrix of $M$.
Recall the augmentation morphism $\varepsilon_{F_n}\colon \Omega_0 F_n \to \nu F_n$
and the morphism $t_{n0n}\colon \Omega_n F_0 \to \Omega_0 F_n$ from \zcref{eq:F-Omega-nuF,eq:flat-injective presentations:t}.

\begin{corollary}
	\label{thm:flat-injective presentation module}
	Let $M \in \Pers{\Z^n}$ be finitely dimensional, $F_\bullet$ be a free resolution of $M$ of length $n$, and let $t_{ijk}$ be as above.
	Then the composite morphism
	\begin{equation*}
		\varphi\colon\quad F_0 \stackrel=\longrightarrow \Omega_n F_0 \xto{t_{n0n}} \Omega_0 F_n \stackrel{\varepsilon_{F_n}}{\longrightarrow} \nu F_n
	\end{equation*}
	is a flat-injective presentation of $M$, where
	\begin{equation}
		\label{eq:t_n0n}
		t_{n0n} = s_{0,n-1} \kappa_{1,n-1} \dotsm s_{n-1,0} \kappa_{n,0}.
	\end{equation}
\end{corollary}
\begin{proof}
	According to \zcref{thm:main},
	we have quasi-isomorphisms
	\begin{equation*}
		\begin{tikzcd}
			F_\bullet\mathrlap{\colon} \dar["\simeq", "\tilde{\varpi}'_\bullet"'] \ar[dd, "\phi_\bullet"', "\simeq", out=190, in=170] & \cdots \rar & F_1 \dar \rar                  & F_0 \dar \rar                & 0 \dar \rar                    & \cdots             \\
			\tilde{\Omega}_\bullet\HShift{n}\mathrlap{\colon} \dar["\simeq", "{\tilde{\varepsilon}_\bullet\HShift{n}}"']                 & \cdots \rar & \tilde{\Omega}_{n+1} \rar \dar & \tilde{\Omega}_{n} \rar \dar & \tilde{\Omega}_{n-1} \rar \dar & \cdots             \\
			I_\bullet\HShift{n}\mathrlap{\colon}                                                                                         & \cdots \rar & 0 \rar                         & I_n \rar                     & I_{n-1} \rar                   & \cdots\mathrlap{,}
		\end{tikzcd}
	\end{equation*}
	where $I_n = \nu F_n$, and $\tilde{\Omega}_{n} = \Omega_n F_{0} \oplus \dotsb \oplus \Omega_0 F_{n}$.
	The image of their composition $\phi_\bullet$ is quasi-isomorphic to $F_\bullet$ and hence to $M$,
	because $F$ is a free resolution of $M$.
	The only non-zero component of $\phi_\bullet$ is in degree zero:
	\begin{equation*}
		\varphi \coloneqq \phi_0 = \bigl(0, \dotsc, 0, \varepsilon_{F_n}\bigr)
		\begin{psmallmatrix}
			\id_{\Omega_n F_0} \\
			- t_{n01} \\
			\vdots \\
			(-1)^n t_{n0n}
		\end{psmallmatrix}
		= (-1)^n \varepsilon_{F_n} t_{n0n} \colon F_0 \to \Omega_0 F_n \to I_n.
	\end{equation*}
	This shows that $\im \varepsilon_{F_n} t_{n0n} = \im \tilde{\varepsilon}_n \tilde{\varpi}'_0 = M$.
\end{proof}

\begin{remark}
	Recall the diagram of the double complex $\Omega_\bullet F_\bullet$ from \eqref{eq:big-cd}.
	The morphism $\varphi$ is the composition
	\begin{equation}
		\label{eq:flat-injective-pres-decomposed}
		\begin{tikzcd}[row sep=small]
			                                           &                                  &             &                                             & \Omega_0 F_n \rar{\varepsilon_{F_n}} & \nu F_n, \\
			                                           &                                  &             & \Omega_1 F_{n-1} \rar[swap]{\kappa_{1,n-1}} & \Omega_0 F_{n-1} \uar{s_{0,n-1}}     &          \\
			                                           & \Omega_{n-1} F_1 \rar            & \cdots \rar & \Omega_1 F_{n-2} \uar["s_{1,n-2}"]          &                                      &          \\
			F_0 = \Omega_n F_0 \rar[swap]{\kappa_{n0}} & \Omega_{n-1} F_0 \uar{s_{n-1,0}} &             &                                             &                                      &
		\end{tikzcd}
	\end{equation}
	where all objects but $\nu F_n$ are flat, and $\nu F_n$ is injective.
\end{remark}

We finish this section by a short comment on minimal flat-injective presentations.
A flat-injective presentation $\varphi\colon F \to I$ of a module $M$ is called \emph{minimal}
if $F \to M$ and $I^* \to M^*$ are minimal generating systems of $M$ and $M^*$, respectively.
See \cite{GrimpenStefanou:2025}.

\begin{proposition}
	If $F_\bullet$ is a minimal free resolution of a finite dimensional module $M \in \Pers{\Z^n}$
	then the flat-injective presentation $\varphi$ from \zcref{thm:flat-injective presentation module} is minimal.
\end{proposition}
\begin{proof}
	If $F_\bullet$ is a minimal free resolution of $M$,
	then $F_0 \to M$ is a minimal generating system.
	Both dualities $(-)^*$ and $(-)^\dagger$ preserves trivial summands.
	In particular, $\nu$ maps minimal free to minimal cofree resolutions,
	so $I^\bullet \coloneqq \nu F_\bullet[n]$ is a minimal cofree resolution of $M$.
	It follows that $(I^\bullet)^*$ is a minimal free resolution of $M^*$, so $(I^0)^* = F_n^\dagger\Shift{-\one} \to M^*$ is a minimal generating system.
\end{proof}

\subsection{Representative matrix for \texorpdfstring{$\varphi$}{φ}}
\label{sec:flange-pres-matrix}
We now have a formula for the flat-injective presentation $\varphi$ of $M$ per \zcref{thm:flat-injective presentation module}.
In this section, we explain how to obtain a matrix $\varphi$ representing it.
Assume that we are given matrices representing the free resolution $F_\bullet$ of $M$ with respect to a fixed choice of bases.
We then construct the matrix $\varphi$
such that it represents $\varphi$ with respect to the same basis of $F_0$, and the dual basis of $\nu F_n$.

Before, we introduce some notation.
Recall the function $p_Q\colon \Z^n \to \Z^{n-\abs{Q}}$ for $Q \subseteq \IntSet{n}$ that forgets the coordinates indexed by $Q$.
It has a section $r_Q\colon \Z^{n-\abs{Q}} \to \lZ^{n}$ that inserts $-\infty$ for the coordinates indexed by $Q$.
For a $\Z^n$-graded matrix $U$, let $p_Q(U)$ and $r_Q(U)$ be the graded matrices obtained by applying $p_Q$ and $r_Q$, respectively,
to the row and column grades of $U$.
If $U$ represents a morphism $u$ of free modules in $\Pers{n}$, then $p_Q(U)$ represents $\colim_Q u$
with respect to the induced bases, and analogously for $\Delta_Q$ and $r_Q$.

\paragraph{Matrices \texorpdfstring{$S_{ij}$}{Sᵢⱼ} representing the contractions \texorpdfstring{$s_{ij}$}{sᵢⱼ}}
Let $M \in \Pers{n}$ be a finite dimensional module and $F_\bullet$ be a free resolution of length $n$ of $M$.
Let $D_1,\dotsc,D_n$ be graded matrices representing $F_\bullet$ with respect to some fixed basis.
Since $M$ is finite dimensional, the chain complex $\colim_Q F_\bullet$ is acyclic and hence contractible for each nonempty $Q$.
The proof of \zcref{rmk:obtaining-contraction} yields an explicit algorithm to compute from $D_1,\dotsc,D_n$
graded matrices $\bar{S}^Q_0,\dotsc,\bar{S}^Q_{n-1}$ that represent a chain contraction $\bar{s}^Q_\bullet$ of $\colim_Q F_\bullet$.
Now the matrices $S^Q_j \coloneqq r_Q (\bar{S}^Q_j)$
represent the chain contraction $s^Q_\bullet = \Delta_Q \bar{s}^Q_\bullet$ of $\Colim_Q F_\bullet$,
and for each $k < n$, the block matrices
\[
	S_{ij} \coloneqq \smashoperator{\bigoplus_{Q \in \binom{\IntSet{n}}{n-i}}} S^Q_j
\]
represent the desired chain contraction morphisms $s_{ij}$.

\begin{remark}
	\label{rmk:representative-index-Q}
	For each $Q$ with $\abs{Q} \geq 1$, we may choose an element $k_Q \in Q$
	and set $\bar{s}^Q_\bullet \coloneqq p_Q (r_{\{k_Q\}} (\bar{s}^{\{k_Q\}}_\bullet))$.
	This way, it suffices to compute only the $n$ contractions $s_\bullet^{\{i\}}$ for $1 \leq i \leq n$.
\end{remark}

\paragraph{Matrices $K_{ij}$ representing the boundary morphism $\kappa_{ij}$}
Recall that $\kappa_{ij} = \kappa_i \otimes F_j$, where $\kappa_i$ is the $i$th boundary morphism of the chain complex $\Omega_\bullet$.
From the construction of $\Omega_\bullet$, we see that the graded matrix $K_i$ representing $\kappa_i$
is the $i$th boundary matrix of the standard $n$-simplex, with the row and column grades dictated by $\Omega_\bullet$.

\begin{definition}
	For multisets $r$ and $s$, we let $r * s \coloneqq \Set{a+b; a\in r, b \in s}$.
	The \emph{graded Kronecker product} $U \otimes V$ of two graded matrices
	$U \in \F^{\bm{m} \times \bm{n}}$ and $V \in \F^{\bm{p} \times \bm{q}}$
	is the graded $(\bm{m}*\bm{p}) \times (\bm{n}*\bm{q})$-matrix with entries
	\[
		\u{U \otimes V} \coloneqq \begin{psmallmatrix}
			u_{11} V & \cdots & u_{1l} V \\
			\vdots   &        & \vdots   \\
			u_{k1} V & \cdots & u_{kl} V
		\end{psmallmatrix}.
	\]
\end{definition}

If $M$ and $N$ are both flat, then $M \otimes N$ is flat of graded rank $\rk M * \rk N$,
and if the graded matrices $U$ and $V$ represent two morphisms $u$ and $v$ of flat modules,
then $U \otimes V$ represents the morphism $u \otimes v$.
Therefore, the morphisms $\kappa_{ij}$ are represented by the graded matrices $K_{ij} \coloneqq K_i \otimes E_{\rk F_j}$,
where $E_{\rk F}$ denotes the graded $(\rk F)\times(\rk F)$-unit matrix.
\zcref[S]{thm:flat-injective presentation module} now directly yields:

\begin{proposition}
	\label{thm:flat-injective-pres-matrix}
	Let graded matrices $D_1,\dotsc,D_n$ represent a free resolution of a finite dimensional module $M$.
	With $S_{ij}$ and $K_{ij}$ as above, a graded matrix $\Phi$ that represents a flat-injective presentation of $M$
	is given by
	\begin{equation}
		\label{eq:flat-injective-pres-matrix}
		\Phi = S_{0,n-1} K_{1,n-1} \dotsm S_{n-1,0} K_{n0}.
	\end{equation}
\end{proposition}

\paragraph{Algorithmic aspects}
\zcref[S]{thm:flat-injective-pres-matrix} makes it clear that to compute the flat-injective presentation matrix $\Phi$,
it is not necessary to compute the contraction matrices $S^Q_j$ for all $Q$ and $j$.
From \zcref{rmk:representative-index-Q}, with $k_Q = \min Q$, we see that it suffices to compute the matrices
$\bar{S}^{\{k\}}_j$ for all $1 \leq k \leq n-j \leq n$ and define $S_{ij}$ as the block matrix
\[
	S_{ij} = \smashoperator{\bigoplus_{Q \in \binom{n}{n-i}}} p_Q \bigl(r_{\{j_Q\}} (\bar{S}^{\{i_Q\}}_j)\bigr)
\]
for $j = 0,\dotsc,n-1$ and $i = n-1-j$.

\begin{theorem}[Complexity]
	\label{thm:complexity}
	Let $M \in \Pers{\Z^n}$ be finite dimensional module and $F_\bullet$ be a free resolution of $M$ of length $n$.
	Treating the number of parameters $n$ as a constant,
	a flat-injective presentation matrix $\Phi$ of $M$ can be computed in time $\calO(\ell^3)$,
	where $\ell \coloneqq \max_i \abs{\rk F_i}$.
\end{theorem}
\begin{proof}
	A matrix $U$ is \emph{reduced} if the \emph{pivots}
	$\piv U_{*j} \coloneqq \max\Set{j; U_{ij} \neq 0}$ of its nonzero columns are pairwise distinct.
	Assuming that the rows and columns of $\bar{D}_d \coloneqq p_{\{k\}}(D_{d})$ are ordered nondescendingly by grade for each $d$,
	it follows from the proof of \zcref{rmk:obtaining-contraction} that $\bar{S}^{\{k\}}_j$ can be obtained recursively as follows.
	Consider the block matrix $(D_{1}\  E_{p_{\{k\}}(\rk F_0)})$, and let \[U^{(0)} = \begin{psmallmatrix}
		U^{(0)}_{11} & U^{(0)}_{12} \\
		0            & U^{(0)}_{22}
	\end{psmallmatrix}\] 
	be a graded invertible upper triangular block matrix such that $(D_{1}\ E_{p_{\{k\}}(\rk F_0)})\, U^{(0)}$ is reduced;
	such a matrix $U^{(0)}$ can be obtained through a simple column reduction scheme in time $\calO(\ell^3)$.
	Let $\bar{S}^{\{k\}}_0 \coloneqq U^{(0)}_{12}$.
	For $j > 0$, let \[U^{(j)} = \begin{psmallmatrix}
		U^{(j)}_{11} & U^{(j)}_{12} & U^{(j)}_{13} \\
		0            & U^{(j)}_{22} & U^{(j)}_{23} \\
		0            & 0            & U^{(j)}_{33}
	\end{psmallmatrix}\]
	such that $(D_{j+1}\ S_{j-1}\ E_{p_{\{k\}}(\rk  F_j)})\, U_j$ is reduced,
	and set $\bar{S}^{\{k\}}_j \coloneqq U^{(0)}_{13}$.
	Each of the $\frac{1}{2} n(n+1)$ many matrices $S^{\{k\}}_j$ for $1 \leq k \leq n-j \leq n$
	can be computed in time $\calO(\ell^3)$.

	Assembling the matrices $\bar{S}^{\{k\}}_j$ together yields the matrices $S_{ij}$.
	Each matrix $S_{ij}$ and $K_{ij}$ has at most $a\ell$ rows and columns,
	where $a = \binom{n}{\lfloor n/2\rfloor}$.
	Therefore, evaluating the matrix product \eqref{eq:flat-injective-pres-matrix} can be done in time
	$\calO(a^3\ln(2n) \ell^3) = \calO(\ell^3)$.
\end{proof}

\begin{remark}[Obtaining $\bar{s}^Q_\bullet$, revisited]
	We are ultimately interested in forming the product \eqref{eq:t_n0n},
	where $s_{i,j-1} \partial_{i,j} + \partial_{i,j+1} s_{i,j} = \id$ for all $i < n$ and all $j$.
	For $k \geq 0$, let $u_k \coloneqq s \kappa u_{k-1}\colon \Omega_n\otimes F_0 \to \Omega_{n-k}\otimes F_k$ with $u_0 = \id$,
	leaving out indices as in the proof of \zcref{thm:main}.
	In particular, we get $\varphi = u_n$.
	Because $\partial_{ij}$ and $\kappa_{ij}$ represent the boundary morphism of a bounded below double complex,
	one can show inductively that $\partial \kappa u_k = 0$
	and thus $\kappa s u_k = \partial s \kappa u_k$.
	In other words, any collection of contraction morphism $s_{ij}$ has to satisfy
	\[
		(s \kappa u_k)(m) \in \partial^{-1}(\{(\kappa u_k)(m)\})
	\]
	for every $m \in \Omega_n\otimes F_0$.
	This allows to compute the matrix product \eqref{thm:flat-injective-pres-matrix} without computing the entire contraction matrices $S_{ij}$,
	but instead by computing iterated preimages of $\partial_{n-k,k}$.
	Note that this strategy has no better algorithmic complexity,
	but is observed to have a better runtime in practice.
\end{remark}

\subsection{An example}
\label{sec:example}
Consider the $\Z^2$-module $M$ with the following minimal free resolution $F_\bullet$,
where the numbers at the grades correspond to the rows and column indices of the matrices:
\begin{equation*}
	\def\Setup{
		\Axes[->, shorten >=-4pt, shorten <=-4pt](0,0)(3.5,3.5)
		\Grid[0](1){3}[0](1){3}
	}
	\tikzset{
		every module diagram/.append style={x=.4cm,y=.4cm},
		every label/.append style={outer sep=-1pt, label position=left}
	}
	\underbrace{
		\begin{tikzpicture}[module diagram]
			\Setup
			\FreeModule[blue](2,2)[generator,"1"]{}
			\FreeModule[blue](3,3)[generator,"2"]{}
		\end{tikzpicture}
		\xto{\Mtx*[r]{0 & 1 \\ 1 & -1 \\ -1 & 0 \\ 0 & -1}}
		\begin{tikzpicture}[module diagram]
			\Setup
			\FreeModule[blue](0,3)[generator,"1"]{}
			\FreeModule[blue](1,2)[generator,"2"]{}
			\FreeModule[blue](2,1)[generator,"3"]{}
			\FreeModule[blue](3,0)[generator,"4"]{}
		\end{tikzpicture}
		\xto{\Mtx*[r]{1 & 1 & 1 & 0 \\ 0 & -1 & -1 & 1}}
		\begin{tikzpicture}[module diagram]
			\Setup
			\FreeModule[blue](0,1)[generator,"1"]{}
			\FreeModule[blue](1,0)[generator,"2"]{}
		\end{tikzpicture}
	}_{F_\bullet}
	\longrightarrow
	\underbrace{
		\begin{tikzpicture}[module diagram]
			\Setup
			\begin{scope}[blue]
				\filldraw[fill opacity=0.2] (.5,-.5) -| (2.5,2.5) -| (-.5,.5) -| cycle;
				\filldraw[fill opacity=0.2] (.5,.5) rectangle (1.5,1.5);
				\draw[->] (1,.25) to["$\Mtx{1 \\ 0}$" {right, at start, anchor=north west, inner sep=1pt}] (1,.75);
				\draw[->] (.25,1) to["$\Mtx{0 \\ 1}$" {above, at start, anchor=-55, inner sep=1pt}] (.75,1);
				\draw[->] (1.25,1.25) to["$\scriptstyle (1\:1)$" {left, at end, anchor=south, inner sep=1pt}] (1.75,1.75);
			\end{scope}
		\end{tikzpicture}
	}_{M}
\end{equation*}
Passing to the chain complex $\colim_{\{2\}} F_\bullet$
amounts to forgetting the second (vertical) coordinate of the grading.
This gives the acyclic chain complex of $\Z$-modules given by the solid arrows of the following diagram:
\begin{equation}
	\label{eq:ex:colim-2}
	\colim_{\{2\}} F_\bullet\colon
	\raisebox{0pt}[\height][.6\depth]{
	\begin{tikzcd}[ampersand replacement=\&, column sep=4em, row sep=3em, baseline=(b.base)]
		|[alias=b]| 0 \rar \&[-3em]
		\begin{tikzpicture}[module diagram]
			\draw[|->] (0,0) -- (6.5,0);
			\draw[blue, yshift=6pt, thick] (4,0) node[generator,"1" above]{} -- (6.5,0);
			\draw[blue, yshift=4pt, thick] (6,0) node[generator,"2" {above, yshift=2pt}]{} -- (6.5,0);
		\end{tikzpicture}
		\rar{\Mtx*[r]{0 & 1 \\ 1 & -1 \\ -1 & 0 \\ 0 & -1}}
		\ar[from=r, bend left=2em, dashed, "{\bar{S}^{\{2\}}_1 = \Mtx*[r]{0 & 0 & -1 & 0 \\ 0 & 0 & 0 & -1}}"]
		\&
		\begin{tikzpicture}[module diagram]
			\draw[|->] (0,0) -- (6.5,0);
			\draw[blue, yshift=10pt, thick] (0,0) node[generator,"1" above]{} -- (6.5,0);
			\draw[blue, yshift= 8pt, thick] (2,0) node[generator,"2" {above, yshift=2pt}]{} -- (6.5,0);
			\draw[blue, yshift= 6pt, thick] (4,0) node[generator,"3" {above, yshift=4pt}]{} -- (6.5,0);
			\draw[blue, yshift= 4pt, thick] (6,0) node[generator,"4" {above, yshift=6pt}]{} -- (6.5,0);
		\end{tikzpicture}
		\rar{\Mtx*[r]{1 & 1 & 1 & 0 \\ 0 & -1 & -1 & 1}}
		\ar[from=r, bend left=2em, dashed, "{\bar{S}^{\{2\}}_0 \Mtx*[r]{1 & 1 \\ 0 & -1 \\ 0 & 0 \\ 0 & 0}}"]
		\&
		\begin{tikzpicture}[module diagram]
			\draw[|->] (0,0) -- (6.5,0);
			\draw[blue, yshift=6pt, thick] (0,0) node[generator,"1" above]{} -- (6.5,0);
			\draw[blue, yshift=4pt, thick] (2,0) node[generator,"2" {above, yshift=2pt}]{} -- (6.5,0);
		\end{tikzpicture}
		\rar \&[-3em] 0
	\end{tikzcd}
	}
\end{equation}
It is contractible per the chain contraction $\bar{s}^{\{2\}}_\bullet$ given by the dashed morphisms.
Analogously, forgetting the first (horizontal) coordinate, we obtain the complex
\begin{equation}
	\label{eq:ex:colim-1}
	\tikzset{every module diagram/.append style={y={(0,0.75)}}}
	\colim_{\{1\}} F_\bullet\colon
	\begin{tikzcd}[ampersand replacement=\&, column sep=5em, baseline=(b.base)]
		|[alias=b]| 0 \rar \&[-4em]
		\begin{tikzpicture}[module diagram]
			\draw[|->] (0,0) -- (0, 6.5);
			\draw[blue, xshift=6pt, thick] (0,4) node[generator,"1" right]{} -- (0,6.5);
			\draw[blue, xshift=4pt, thick] (0,6) node[generator,"2" {right, xshift=2pt}]{} -- (0,6.5);
		\end{tikzpicture}
		\rar{\Mtx*[r]{0 & 1 \\ 1 & -1 \\ -1 & 0 \\ 0 & -1}}
		\ar[from=r, dashed, bend left, "{\bar{S}^{\{1\}}_1 = \Mtx*[r]{1 & 1 & 0 & 0 \\ 1 & 0 & 0 & 0}}"]
		\&
		\begin{tikzpicture}[module diagram]
			\draw[|->] (0,0) -- (0,6.5);
			\draw[blue, xshift=10pt, thick] (0,0) node[generator,"4" right]{} -- (0,6.5);
			\draw[blue, xshift= 8pt, thick] (0,2) node[generator,"3" {right, xshift=2pt}]{} -- (0,6.5);
			\draw[blue, xshift= 6pt, thick] (0,4) node[generator,"2" {right, xshift=4pt}]{} -- (0,6.5);
			\draw[blue, xshift= 4pt, thick] (0,6) node[generator,"1" {right, xshift=6pt}]{} -- (0,6.5);
		\end{tikzpicture}
		\rar{\Mtx*[r]{1 & 1 & 1 & 0 \\ 0 & -1 & -1 & 1}}
		\ar[from=r, dashed, bend left, "{\bar{S}^{\{1\}}_0 = \Mtx*[r]{0 & 0 \\ 0 & 0 \\ 1 & 0 \\ 1 & 1}}"]
		\&
		\begin{tikzpicture}[module diagram]
			\draw[|->] (0,0) -- (0,6.5);
			\draw[blue, xshift=6pt, thick] (0,0) node[generator,"2" right]{} -- (0,6.5);
			\draw[blue, xshift=4pt, thick] (0,2) node[generator,"1" {right, xshift=2pt}]{} -- (0,6.5);
		\end{tikzpicture}
		\rar \&[-4em]
		0
	\end{tikzcd}
\end{equation}
together with the contraction $\bar{s}^{\{1\}}_\bullet$ represented by matrices $\bar{S}^{\{1\}}_\bullet$ (dashed arrows).
Then $s_{1\bullet}\coloneqq s^{\{1\}}_\bullet \oplus s^{\{2\}}_\bullet$
is a chain contraction of $\Omega_1 F_\bullet = \Colim_{\{1\}} F_\bullet \oplus \Colim_{\{2\}} F_\bullet$.
By \zcref{rmk:representative-index-Q}, we set $i_{\{1,2\}} = 2$ and use the entries of $\bar{S}^{\{2\}}_\bullet$ for $\bar{S}^{\{1,2\}}_\bullet$.
Then $s_{0\bullet} \coloneqq \Colim_{\{1,2\}} s^{\{2\}}_\bullet$ is a chain contraction of $\Omega_0 F_\bullet = \Colim_{\{1,2\}} F_\bullet$.
We obtain the flat-injective presentation matrix
\begin{multline*}
	\Phi = S_{01} K_{11} S_{10} K_{20}
	= S^{\{1,2\}}_1
	  (E \ {-E})
	  \begin{pmatrix} S^{\{1\}}_0 & 0\\ 0& S^{\{2\}}_2\end{pmatrix}
	  \begin{pmatrix} E \\ E\end{pmatrix}\\
	\begin{aligned}[t] &=
		\mleft(\begin{smallarray}{rrrr}0 & 0 & -1 & 0 \\ 0 & 0 & 0 & -1\end{smallarray}\mright)
		\mleft(\begin{smallarray}{rrrr|rrrr}1 & 0 & 0 & 0 & -1 & 0 & 0 & 0 \\ 0 & 1 & 0 & 0 & 0 & -1 & 0 & 0 \\ 0 & 0 & 1 & 0 & 0 & 0 & -1 & 0 \\ 0 & 0 & 0 & 1 & 0 & 0 & 0 & -1\end{smallarray}\mright)
		\smash{\mleft(\begin{smallarray}{rr|rr} 0 & 0 \\ 0 & 0 \\ 1 & 0 \\ 1 & 1 \\\hline & & 1 & 1 \\ & & 0 & -1 \\ & & 0 & 0 \\ & & 0 & 0\end{smallarray}\mright)}
		\mleft(\begin{smallarray}{cc}1 & 0 \\ 0 & 1 \\ \hline 1 & 0 \\ 0 & 1\end{smallarray}\mright)\\
	&= \Mtx*[r]{-1 & 0 \\ -1 & -1}.\end{aligned}
\end{multline*}
With $\cg^\Phi = \rk F_0 = \Mtx{(0, 1)\\(1, 0)}$ and $\rg^\Phi = \rk F_2\Shift{\one} = \Mtx{(1, 1) \\ (2,2)}$,
one checks that $\Phi$ is anti-valid and thus represents a morphism $\varphi\colon F_0 \to \nu F_2\Shift{\one}$.
We checked in \zcref{ex:resolutions:ctd} already that this is morphism is a flat-injective presentation of $M$.
By changing the basis of the codomain, one obtains the following more symmetric flat-injective presentation matrix of $M$:
\begin{equation*}
		\begin{pNiceMatrix}[first-col,first-row,small]
			& (0,1) & (1,0) \\
			(1,1) & 1 & -1 \\
			(2,2) & 1 & \phantom{-}1
		\end{pNiceMatrix}.
\end{equation*}

\subsection{Implementation}
\label{sec:implementation}
An implementation of the algorithm described above is available in the software package \texttt{FlangePresentations.jl} \cite{FlangePresentations.jl}.
The software provides methods for loading a free resolution in the scc2020 format 
and thus provides interoperability with the software packages \texttt{mpfree}~\cite{Kerber:2021} and \texttt{2pac}~\cite{Lenzen:2023}
for computing minimal free resolutions of persistent homology.
Experiments show that for realistic sizes of resolutions of persistent homology,
computing the flat-injective matrix with our software is alomost instantaneous.
The above example is available as \texttt{example.scc2020} in the repository.
For more details on using the software, we refer to the readme file in the repository.

\section{Flat-injective presentations of persistent homology}
We finish this paper by some last remarks on computing a flat-injective presentation of the homology of a free chain complex.
Let $(C_\bullet, \partial_\bullet)$ be an eventually acyclic chain complex of free modules in $\Pers{n}$.
Of course, one can compute a minimal free resolution of $H_d(C_\bullet)$, and then use \zcref{thm:flat-injective presentation module}
to obtain a minimal flat-injective presentation of $H_d(C_\bullet)$.
We outline a slightly different approach now.

By \zcref{thm:main}, we have a quasi-isomorphism $\phi\colon C_\bullet \to \nu C_\bullet [n]$,
where $\nu C_\bullet[n]$ is a complex of injective modules.
Let $f\colon F \onto \ker\partial_d$ be a free cover and $i\colon \coker \nu\partial_{d+n+1} \into I$ be an injective hull.

\begin{theorem}
	The composition
	\begin{equation}
		\label{eq:flange-pres homology}
		F \xto{f} C_d \xto{\phi_d} \nu C_{d+n} \xto{i} I
	\end{equation}
	is a flat-injective presentation of $H_d(C_\bullet)$.
	It is minimal if and only $F$ and $I$ are a minimal free cover and minimal injective hull, respectively.
\end{theorem}

Note that $\coker \nu\partial_{d+n+1} \cong (\ker \partial_{d+n+1}^\dagger)^*$
is the Matlis dual of a kernel of a morphism of free modules,
and if $i^*\colon I^* \to C_{d+n}^\dagger$ is a (minimal) free cover of $\ker \partial_{d+n+1}^\dagger$,
then $i$ is a (minimal) injective hull of $\coker \nu\partial_{d+n+1}$.
Therefore, a matrix representing \eqref{eq:flange-pres homology} can be computed
using the method outlined in this paper, given that an algorithm to compute (minimal free covers of) kernels of morphisms of free modules is available.

For more than two parameters, it might be an advantage for the computation of flat-injective presentation matrices
to compute just the minimal covers of $\ker \partial_d$ and $\ker \partial_{d+n+1}^\dagger$,
instead of computing the entire free resolution of $H_d(C_\bullet)$.
For two parameters, these two minimal covers already determine the entire minimal free resolution, so there is no efficiency gain expected.

\printbibliography
\end{document}